\documentclass[11pt,reqno]{amsart}

\setlength{\parindent}{0pt} \setlength{\textwidth}{5.8in}
\setlength{\oddsidemargin}{0.4in}
\setlength{\evensidemargin}{0.4in}
\parskip = 4pt
\newtheorem{proposition}{Proposition}[section]

\newtheorem{corollary}[proposition]{Corollary}
\newtheorem{theorem}[proposition]{Theorem}

\theoremstyle{definition}
\newtheorem{definition}[proposition]{Definition}

\newtheorem{examples}[proposition]{Examples}
\newtheorem{remark}[proposition]{Remark}

\newcommand{\thlabel}[1]{\label{th:#1}}
\newcommand{\thref}[1]{Theorem~\ref{th:#1}}

\newcommand{\colabel}[1]{\label{co:#1}}
\newcommand{\coref}[1]{Corollary~\ref{co:#1}}

\newcommand{\exlabel}[1]{\label{ex:#1}}

\newcommand{\eqlabel}[1]{\label{eq:#1}}
\newcommand{\equref}[1]{(\ref{eq:#1})}

\def\ot{\otimes}

\def\ZZ{{\mathbb Z}}

\newcommand{\Cc}{\mathcal{C}}

\def\*C{{}^*\hspace*{-1pt}{\Cc}}
\def\text#1{{\rm {\rm #1}}}

\input xy
\xyoption {all} \CompileMatrices

\usepackage{color,amssymb,graphicx}

\begin{document}

\title[Coquasitriangular structures for extensions of Hopf algebras. Applications]
{Coquasitriangular structures for extensions of Hopf algebras.
Applications}

\author{A. L. Agore}
\address{Faculty of Engineering, Vrije Universiteit Brussel, Pleinlaan 2, B-1050 Brussels, Belgium}
\email{ana.agore@vub.ac.be and ana.agore@gmail.com}

\subjclass[2010]{16T10, 16T05, 16S40}

\keywords{unified product, double cross product, generalized
quantum double, coquasitriangular Hopf algebras, Yang-Baxter
equation}

\begin{abstract}
Let $A \subseteq E$ be an extension of Hopf algebras such that
there exists a normal left $A$-module coalgebra map $\pi : E \to
A$ that splits the inclusion. We shall describe the set of all
coquasitriangular structures on the Hopf algebra $E$ in terms of
the datum $(A, E, \pi)$ as follows: first, any such extension $E$
is isomorphic to a unified product $A \ltimes H$, for some unitary
subcoalgebra $H$ of $E$ (\cite{am2}). Then, as a main theorem, we
establish a bijective correspondence between the set of all
coquasitriangular structures on an arbitrary unified product $A
\ltimes H$ and a certain set of datum $(p, \tau, u, v)$ related to
the components of the unified product. As the main application, we
derive necessary and sufficient conditions for Majid's infinite
dimensional quantum double $D_{\lambda}(A, H) = A \bowtie_{\tau}
H$ to be a coquasitriangular Hopf algebra. Several examples are
worked out in detail.
\end{abstract}
\maketitle

\section*{Introduction}

An important class of Hopf algebras is that of quasitriangular
Hopf algebras or strict quantum groups. They were introduced by
Drinfeld in \cite{Dri} as a remarkable tool for studying the
quantum Yang-Baxter equation $R^{12}R^{13}R^{23} =
R^{23}R^{13}R^{12}$. That is, if M is a representation of a
quasitriangular Hopf algebra $(H, R)$, then the canonical map
$m\ot n \mapsto \sum R^1 m \ot R^2 n$ is a solution for the
quantum Yang-Baxter equation. The dual concept, namely that of a
coquasitriangular Hopf algebra (also called braided Hopf algebras
in \cite{JB1}, \cite{JB2} or \cite{MLZ}) was first introduced by
Majid in \cite{Mj} and independently by Larson and Towber in
\cite{LT}. These are Hopf algebras $A$ endowed with a linear map
$p: A \ot A \rightarrow k$ satisfying some compatibility
conditions. There is, of course, a dual result concerning the
quantum Yang-Baxter equation: if M is a corepresentation of a
coquasitriangular Hopf algebra $(A, p)$ then the canonical map
$R_p(m \ot n) = p\bigl(m_{<1>}, n_{<1>}\bigl) m_{<0>} \ot n_{<0>}$
is a solution for the quantum Yang-Baxter equation. However, what
makes the coquasitriangular Hopf algebras so important is the fact
that the converse of the above statement is also true. Namely, by
the celebrated FRT theorem for any solution $R$ of the quantum
Yang-Baxter equation there exists a quasitriangular bialgebra $(
A(R), p)$ such that $R = R_{p}$ (\cite{book}).

Based on this background, (co)quasitriangular Hopf algebras
generated an explosion of interest and were studied for their
implications in quantum groups, the construction of invariants of
knots and $3$-manifolds, statistical mechanics, quantum mechanics
but they also became a subject of research in its own right.
Complete descriptions of the coquasitriangular structures have
been already obtained for several families of Hopf algebras, see
for instance \cite{a}, \cite{JB1}, \cite{JB2} or \cite{MLZ}.

Among the many research topics related to coquasitriangular Hopf
algebras one is of particular interest: for a given Hopf algebra
$H$, describe (if any) all coquasitriangular structures that can
be defined on $H$. We can formulate the more general problem:

\textit{Let $A \subseteq E$ be an extension of Hopf algebras. What
is the connection between the coquasitriangular structures of $A$
and those of $E$?}

Obviously, if $(E, p)$ is a coquasitriangular Hopf algebra then
$A$ is also a coquasitriangular Hopf algebra with the
coquasitriangular structure given by the restriction of $p$ to $A
\ot A$. The difficult part of the problem is the converse: if
$\sigma : A \ot A \to k$ is a coquasitriangular structure on $A$,
could it be extended to a coquasitriangular structure on $E$? In
this paper we give a complete answer to this problem in the case
when the extension $A \subseteq E$ splits in the sense of
\cite{am2}: i.e. there exists $\pi : E \to A$  a normal left
$A$-module coalgebra map such that $\pi (a) = a$, for all $a \in
A$. It was proved in \cite{am2} that an extension $A \subseteq E$
splits in the above sense if and only if $E$ is isomorphic to a
unified product between $A$ and a certain subcoalgebra $H$ of $E$.
The unified product was introduced in \cite{am1} as an answer to
the restricted extending structures problem for Hopf algebras.
Unified products characterize Hopf algebras which factorize
through a Hopf subalgebra $A$ and a subcoalgebra $H$ such that
$1\in H$. As special cases of the unified product we recover the
double cross product or the crossed product of Hopf algebras (see
Examples 1.1).

An outline of the paper is as follows. In Section 1 we recall the
construction and some basic properties of unified products. In
Section 2, the notions of generalized $(p,f)$ - left/right skew
pairing and generalized $(u,v)$ - braidings are introduced. The
main result of the paper is \thref{4.5} where a bijective
correspondence between the set of all coquasitriangular structures
$\sigma$ on the unified product $A \ltimes H$ and the set of all
quadruples $(p, \tau, u, v)$ satisfying some compatabilities is
established. All coquasitriangular structures on the unified
product are explicitly described in terms of this quadruple $(p,
\tau, u, v)$. In particular, in \coref{2.7} necessary and
sufficient conditions for a double cross product $A \bowtie H$
associated to a matched pair $(A, H, \rhd, \lhd)$ of Hopf algebras
to be a coquasitriangular Hopf algebra are given.

Let $\lambda: H \ot A \rightarrow k$ be a skew pairing between two
Hopf algebras and consider $D_{\lambda}(A, H) := A
\bowtie_{\lambda} H$ to be the \textit{generalized quantum double}
as constructed in (\cite[Example 7.2.6]{majid}). As the main
application of the results in \thref{3.1} the set of all
coquasitriangular structures on the generalized quantum double $
D_{\lambda}(A, H)$ are completely described. In particular, it is
proved that a generalized quantum double is a coquasitriangular
Hopf algebra if and only if both Hopf algebras $A$ and $H$ are
coquasitriangular. Several explicit examples are also provided.

\section{Preliminaries}
Throughout this paper k denotes an arbitrary field. Unless
specified otherwise, all algebras, coalgebras, tensor products and
homomorphisms are over $k$. For a coalgebra $C$, we use Sweedler's
$\Sigma$-notation: $\Delta(c) = c_{(1)}\ot c_{(2)}$,
$(I\ot\Delta)\Delta(c) = c_{(1)}\ot c_{(2)}\ot c_{(3)}$, etc with
summation understood. For a $k$-linear map $f: H \ot H \to A$ we
denote $f(g, \, h) = f (g\ot h)$.\\
Recall from \cite{DT2} that if $A$ and $H$ are two Hopf algebras
and $\lambda: A \otimes H \rightarrow k$ is a $k$-linear map which
fulfills the compatibilities:
\begin{enumerate}
\item[(BR1)] $\lambda(xy , z) = \lambda(x , z_{(1)}) \lambda(y ,
z_{(2)})$\\
\item[(BR2)] $\lambda(1 , z) = \varepsilon(z)$\\
\item[(BR3)] $\lambda(x , lz) = \lambda(x_{(1)} , z)
\lambda(x_{(2)}
, l)$\\
\item[(BR4)] $\lambda(y , 1) = \varepsilon(y)$\\
\end{enumerate}
for all $x$, $y \in A$, $l$, $z \in H$, then $\lambda$ is called
\textit{skew pairing} on $(A, H)$. Notice that a skew pairing
$\lambda$ is convolution invertible with $\lambda^{-1} = \lambda
\circ (S \ot Id)$. Also, by a straightforward computation it can
be seen that if $\lambda$ is a skew pairing on $(A, H)$ then
$\lambda \circ (S \ot Id) \circ \nu$ is also a skew pairing on
$(H, A)$ where $\nu$ is the flip map.

Moreover, recall from \cite{LT} that a Hopf algebra $H$ is called
\textit{coquasitriangular} or \textit{braided} if there exists a
linear map $p: H \otimes H \rightarrow k$ such that relations
$(BR1)-(BR4)$ are fulfilled and
\begin{enumerate}
\item[(BR5)] $p(x_{(1)} , y_{(1)})x_{(2)}y_{(2)} = y_{(1)}x_{(1)}
p(x_{(2)} , y_{(2)})$ \end{enumerate} holds for all $x$, $y$, $z
\in H$.

\subsection*{Unified products} We recall from \cite{am1} the construction of the unified product.
An \textit{extending datum} of a bialgebra $A$ is a system
$\Omega(A) = \bigl(H, \triangleleft, \, \triangleright, \, f
\bigl)$, where $H = \bigl( H, \Delta_{H}, \varepsilon_{H},\\
1_{H}, \cdot \bigl)$ is a $k$-module such that $\bigl( H,
\Delta_{H}, \varepsilon_{H}\bigl)$ is a coalgebra, $\bigl( H,
1_{H}, \cdot \bigl)$ is an unitary not necessarily associative
$k$-algebra, the $k$-linear maps $\triangleleft : H \otimes A
\rightarrow H$, $\triangleright: H \otimes A \rightarrow A$, $f: H
\otimes H \rightarrow A$ are coalgebra maps such that the
following normalization conditions hold:
\begin{equation}\eqlabel{2}
\quad h \triangleright 1_{A} = \varepsilon_{H}(h)1_{A}, \quad
1_{H} \triangleright a = a, \quad 1_{H} \triangleleft a =
\varepsilon_{A}(a)1_{H}, \quad h\triangleleft 1_{A} = h
\end{equation}
\begin{equation}\eqlabel{3}
\Delta_{H}(1_{H}) = 1_{H} \otimes 1_{H}, \qquad f(h, 1_{H}) =
f(1_{H}, h) = \varepsilon_{H}(h)1_{A}
\end{equation}
for all $h \in H$, $a \in A$.

Let $\Omega(A) = \bigl(H, \triangleleft, \, \triangleright, \, f
\bigl)$ be an extending datum of $A$. We denote by $A
\ltimes_{\Omega(A)} H = A \ltimes H$ the $k$-module $A \otimes H$
together with the multiplication:
\begin{equation}\eqlabel{10}
(a \ltimes h)\bullet(c \ltimes g) := a(h_{(1)}\triangleright
c_{(1)})f\bigl(h_{(2)}\triangleleft c_{(2)}, \, g_{(1)}\bigl) \,
\ltimes \, (h_{(3)}\triangleleft c_{(3)}) \cdot g_{(2)}
\end{equation}
for all $a, c \in A$ and $h, g \in H$, where we denoted $a \otimes
h \in A \otimes H$ by $a \ltimes h$. The object $A \ltimes H$ is
called \textit{the unified product of $A$ and $\Omega(A)$} if $A
\ltimes H$ is a bialgebra with the multiplication given by
\equref{10}, the unit $1_{A} \ltimes 1_{H}$ and the coalgebra
structure given by the tensor product of coalgebras, i.e.:
\begin{eqnarray}
\Delta_{A \ltimes H} (a \ltimes h) &{=}& a_{(1)} \ltimes h_{(1)}
\otimes a_{(2)}
\ltimes h_{(2)}\eqlabel{11}\\
\varepsilon_{A \ltimes H} (a \ltimes h) &{=}&
\varepsilon_{A}(a)\varepsilon_{H}(h)\eqlabel{12}
\end{eqnarray}
for all $h \in H$, $a \in A$. We have proved in \cite[Theorem
2.4]{am1} that $A \ltimes H$ is an unified product if and only if
$\Delta_{H} : H \to H\otimes H$ and $\varepsilon_{H} : H \to k$
are $k$-algebra maps, $(H, \lhd)$ is a right $A$-module structure
and the following compatibilities hold:
\begin{enumerate}
\item[(BE1)] $(g\cdot h)\cdot l = \bigl(g \triangleleft
f(h_{(1)}, \, l_{(1)})\bigl)\cdot (h_{(2)}\cdot l_{(2)})$\\
\item[(BE2)] $g \triangleright (ab) = (g_{(1)} \triangleright
a_{(1)})[(g_{(2)}\triangleleft a_{(2)})\triangleright b]$\\
\item[(BE3)] $(g\cdot h) \triangleleft a = [g \triangleleft
(h_{(1)}
\triangleright a_{(1)})] \cdot (h_{(2)} \triangleleft a_{(2)})$\\
\item[(BE4)] $[g_{(1)} \triangleright (h_{(1)} \triangleright
a_{(1)})]f\Bigl(g_{(2)} \triangleleft (h_{(2)} \triangleright
a_{(2)}), \, h_{(3)} \triangleleft a_{(3)}\Bigl) =
f(g_{(1)}, \, h_{(1)})[(g_{(2)} \cdot h_{(2)}) \triangleright a]$\\
\item[(BE5)] $\Bigl(g_{(1)} \triangleright f(h_{(1)}, \,
l_{(1)})\Bigl) f\Bigl(g_{(2)} \triangleleft f(h_{(2)}, \,
l_{(2)}), \, h_{(3)} \cdot l_{(3)}\Bigl) =
f(g_{(1)}, \, h_{(1)})f(g_{(2)} \cdot h_{(2)}, \, l)$\\
\item[(BE6)] $g_{(1)} \triangleleft a_{(1)} \otimes g_{(2)}
\triangleright a_{(2)} = g_{(2)} \triangleleft a_{(2)} \otimes
g_{(1)} \triangleright a_{(1)}$\\
\item[(BE7)] $g_{(1)} \cdot h_{(1)} \otimes f(g_{(2)}, \, h_{(2)})
= g_{(2)} \cdot h_{(2)} \otimes f(g_{(1)}, \, h_{(1)})$
\end{enumerate}
for all $g$, $h$, $l \in H$ and $a$, $b \in A$. In this case
$\Omega(A) = (H, \triangleleft, \, \triangleright, \, f)$ is
called a \emph{bialgebra extending structure} of $A$. A bialgebra
extending structure $\Omega(A) = (H, \triangleleft,
\triangleright, f)$ is called a \emph{Hopf algebra extending
structure} of A if $A \ltimes H$ has an antipode. If $A$ is a Hopf
algebra with an antipode $S_A$ and $H$ has an antipode $S_H$, then
the unified product $A \ltimes H$ has an antipode given by:
$$
S(a \ltimes g) := \Bigl(S_{A}[f\bigl(S_{H}(g_{(2)}), \,
g_{(3)}\bigl)] \ltimes S_{H}(g_{(1)})\Bigl) \bullet \bigl(S_{A}(a)
\ltimes 1_{H}\bigl)
$$
for all $a\in A$ and $g\in H$ (\cite[Proposition 2.8]{am1}).\\
In \cite{am2} it was proved that a Hopf algebra $E$ is isomorphic
to a unified product $A \ltimes H$ if and only if there exists a
morphism of Hopf algebras $i: A \rightarrow E$ which has a
retraction $\pi: E \to A$ that is a normal (\cite[Definition
2.1]{am2}) left $A$-module coalgebra morphism.

\begin{examples}\exlabel{3exemple}
1. Let $A$ be a bialgebra and $\Omega(A) = \bigl(H, \triangleleft,
\triangleright, f \bigl)$ an extending datum of $A$ such that the
cocycle $f$ is trivial, that is $f (g, \, h) = \varepsilon_H (g)
\varepsilon_H (h) 1_A$, for all $g$, $h\in H$.

Then $\Omega(A) = \bigl(H, \triangleleft, \triangleright, f
\bigl)$ is a bialgebra extending structure of $A$ if and only if
$H$ is a bialgebra and $(A, H, \triangleleft, \triangleright)$ is
a matched pair of bialgebras in the sense of \cite[Definition
7.2.1]{majid}. In this case, the associated unified product
$A\ltimes H = A\bowtie H$ is the double cross product of
bialgebras in Majid's terminology (also called bicrossed product
of bialgebras in \cite{K}). Perhaps the most famous example of a
double cross product is the generalized quantum double (see
Section 3 below). If $H$ is a finite dimensional Hopf algebra then
the generalized quantum double coincides with the celebrated
quantum double $D(H) = H^{*op} \bowtie H$ which is a double cross
product by the mutual coadjoint actions:
\begin{eqnarray*}
h \triangleright \alpha = \alpha_{(2)} \langle h , S(\alpha_{(1)})
 \alpha_{(3)} \rangle \quad , \quad
h \triangleleft \alpha = h_{(2)} \langle \alpha , S(h_{(1)})
h_{(3)} \rangle
\end{eqnarray*}
for all $h \in H$ and $\alpha \in H^{*}$.

2. Let $A$ be a bialgebra and $\Omega(A) = \bigl(H, \triangleleft,
\, \triangleright, \, f \bigl)$ an extending datum of $A$ such
that the action $\lhd$ is trivial, that is $h \lhd a =
\varepsilon_A (a) h$, for all $h\in H$ and $a\in A$. In this case,
the associated unified product $A\ltimes H = A
\#_{f}^{\triangleright} \, H$ is called the crossed product of
Hopf algebras. For more details on crossed products of Hopf
algebras we refer to \cite{a}.
\end{examples}

\section{Coquasitriangular structures on the unified products}

In this section we describe the coquasitriangular or braided
structures on the unified product. In other words, we determine
all braided structures that can be defined on the monoidal
category of $A \ltimes H$ - comodules. First we introduce some new
definitions as natural generalizations for the concepts of
braiding and skew pairing.

\begin{definition}
Let $A$ be a Hopf algebra, $H = \bigl( H, \Delta_{H},
\varepsilon_{H}, 1_{H}, \cdot \bigl)$ a $k$-module such that
$\bigl( H, \Delta_{H}, \varepsilon_{H}\bigl)$ is a coalgebra,
$\bigl( H, 1_{H}, \cdot \bigl)$ is an unitary not necessarily
associative $k$-algebra, $f: H \otimes H \rightarrow A$ a
coalgebra map and $p: A\otimes A \rightarrow k$ a braiding on $A$.
A linear map $u : A \otimes H \rightarrow k$ is called
\textit{generalized (p,f) - right skew pairing on $(A, H)$} if the
following compatibilities are fulfilled for any $a$, $b \in A$,
$g$, $t \in H$:
\begin{enumerate}
\item[(RS1)] $u(ab , t) = u(a , t_{(1)}) u(b , t_{(2)})$\\
\item[(RS2)] $u(1 , h) = \varepsilon(h)$\\
\item[(RS3)] $u(a_{(1)} , g_{(2)} \cdot t_{(2)}) p\bigl(a_{(2)} ,
f(g_{(1)} , t_{(1)})\bigl) = u(a_{(1)} , t) u(a_{(2)} , g)$\\
\item[(RS4)] $u(a , 1) = \varepsilon(a)$
\end{enumerate}
\end{definition}

\begin{definition}
Let $A$ be a Hopf algebra, $H = \bigl( H, \Delta_{H},
\varepsilon_{H}, 1_{H}, \cdot \bigl)$ a $k$-module such that
$\bigl( H, \Delta_{H}, \varepsilon_{H}\bigl)$ is a coalgebra,
$\bigl( H, 1_{H}, \cdot \bigl)$ is an unitary not necessarily
associative $k$-algebra, $f: H \otimes H \rightarrow A$ a
coalgebra map and $p: A\otimes A \rightarrow k$ a braiding on $A$.
A linear map $v: H \otimes A \rightarrow k$ is called
\textit{generalized (p,f) - left skew pairing on $(H, A)$} if the
following compatibilities are fulfilled for any $b$, $c \in A$,
$h$, $g \in H$:
\begin{enumerate}
\item[(LS1)] $p\bigl(f(h_{(1)} , g_{(1)}) , c_{(1)}\bigl)
v(h_{(2)} \cdot g_{(2)} , c_{(2)}) = v(h , c_{(1)}) v(g , c_{(2)})$\\
\item[(LS2)] $v(h , 1) = \varepsilon(h)$\\
\item[(LS3)] $v(h , bc) = v(h_{(1)} , c) v(h_{(2)} , b)$\\
\item[(LS4)] $v(1 , a) = \varepsilon(a)$
\end{enumerate}
\end{definition}

\begin{remark}
If $H$ is a bialgebra and $f = \varepsilon \ot \varepsilon$ is the
trivial cocycle then the notion of generalized (p,f) - left/right
skew pairing on $(A, H)$ coincides with the notion of skew pairing
on $(A,H)$.
\end{remark}

\begin{definition}
Let $A$ be a Hopf algebra, $H = \bigl( H, \Delta_{H},
\varepsilon_{H}, 1_{H}, \cdot \bigl)$ a $k$-module such that
$\bigl( H, \Delta_{H}, \varepsilon_{H}\bigl)$ is a coalgebra,
$\bigl( H, 1_{H}, \cdot \bigl)$ is an unitary not necessarily
associative $k$-algebra, $f: H \otimes H \rightarrow A$ a
coalgebra map and $p: A\otimes A \rightarrow k$ a braiding on $A$,
$u: A \otimes H \rightarrow k$ a generalized $(p , f)$ - right skew
pairing and $v: H \otimes A \rightarrow k$ a generalized $(p , f)$
- left skew pairing. A linear map $\tau: H \otimes H \rightarrow
k$ is called a \textit{generalized $(u,v)$ - skew braiding on $H$}
if the following compatibilities are fulfilled for all $h$, $g$,
$t \in H$:
\begin{enumerate}
\item[(SBR1)] $u\bigl( f(h_{(1)} , g_{(1)}) , t_{(1)}\bigl)
\tau(h_{(2)} \cdot g_{(2)} , t_{(2)}) = \tau(h , t_{(1)}) \tau(g ,
t_{(2)})$\\
\item[(SBR2)] $\tau(1 , g) = \varepsilon(g)$\\
\item[(SBR3)] $\tau(h_{(1)} , g_{(2)} \cdot t_{(2)})
v\bigl(h_{(2)} , f(g_{(1)} , t_{(1)}) \bigl) = \tau(h_{(1)} , t)
\tau(h_{(2)} ,
g)$\\
\item[(SBR4)] $\tau(g , 1) = \varepsilon(g)$\\
\item[(SBR5)] $\tau(h_{(1)} , g_{(1)})h_{(2)} \cdot g_{(2)} =
g_{(1)} \cdot h_{(1)} \tau(h_{(2)} , g_{(2)})$
\end{enumerate}
\end{definition}

\begin{remark}
If $H$ is a bialgebra and $f = \varepsilon \ot \varepsilon$ is the
trivial cocycle then the notion of generalized $(u,v)$ - skew
braiding on $H$ coincides with the notion of coquasitriangular
structure (or braiding) on $H$.
\end{remark}

\begin{theorem}\thlabel{4.5}
Let $A$ be a Hopf algebra and $\Omega(A) = (H, \triangleleft,
\triangleright, f)$ a Hopf algebra extending structure of $A$.
There is a bijective correspondence between:

$(i)$ The set of all coquasitriangular structures $\sigma$ on the
unified product $A \ltimes H$;

$(ii)$ The set of all quadruples $(p, \tau, u, v)$ where $p: A
\otimes A \rightarrow k$, $\tau: H \otimes H \rightarrow k$, $u: A
\otimes H \rightarrow k$, $v: H \otimes A \rightarrow k$ are
linear maps such that $(A, p)$ is a coquasitriangular Hopf
algebra, $u$ is a generalized $(p,f)$ - right skew pairing, $v$ is
a generalized $(p,f)$ - left skew pairing, $(H, \tau)$ is a
generalized $(u, v)$ - skew braiding and the following
compatibilities are fulfilled:
\begin{eqnarray}
\eqlabel{4.1} v(h_{(1)} , b_{(1)}) (h_{(2)} \triangleright
b_{(2)}) \otimes (h_{(3)} \triangleleft b_{(3)}) &{=}& b_{(1)}
\otimes h_{(1)}v(h_{(2)} ,
b_{(2)})\\
\eqlabel{4.2} (g_{(1)} \triangleright a_{(1)}) \otimes (g_{(2)}
\triangleleft a_{(2)}) u(a_{(3)} , g_{(3)}) &{=}& u(a_{(1)} ,
g_{(1)})a_{(2)} \otimes
g_{(2)}\\
\eqlabel{4.3} \tau(h_{(1)} , g_{(1)}) f(h_{(2)} , g_{(2)}) &{=}&
f(g_{(1)} , h_{(1)}) \tau(h_{(2)} , g_{(2)})\\
\eqlabel{4.4} u(a_{(1)} , g_{(2)} \triangleleft c_{(2)}) p(a_{(2)}
, g_{(1)}
\triangleright c_{(1)}) &{=}& p(a_{(1)} , c) u(a_{(2)} , g)\\
\eqlabel{4.5} \tau(h_{(1)} , g_{(2)} \triangleleft c_{(2)})
v(h_{(2)} , g_{(1)}
\triangleright c_{(1)}) &{=}& v(h_{(1)} , c) \tau(h_{(2)} , g)\\
\eqlabel{4.6} p(h_{(1)} \triangleright b_{(1)}, c_{(1)}) v(h_{(2)}
\triangleleft b_{(2)} ,
c_{(2)}) &{=}& v(h , c_{(1)}) p(b , c_{(2)})\\
\eqlabel{4.7} u(h_{(1)} \triangleright b_{(1)}, t_{(1)})
\tau(h_{(2)} \triangleleft b_{(2)} , t_{(2)}) &{=}& \tau(h ,
t_{(1)}) u(b , t_{(2)})
\end{eqnarray}
Under the above bijection the coquasitriangular structure $\sigma:
(A \ltimes H) \otimes (A \ltimes H) \rightarrow k$ corresponding
to $(p, \tau, u, v)$ is given by:
\begin{equation}\eqlabel{39}
\sigma(a \ltimes h, b \ltimes g) = u(a_{(1)} , g_{(1)}) p(a_{(2)}
, b_{(1)}) \tau(h_{(1)} , g_{(2)}) v(h_{(2)} , b_{(2)})
\end{equation}
for all $a, b, c \in A$ and $h, g, t \in H$.
\end{theorem}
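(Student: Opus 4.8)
The plan is to exhibit the two maps of the claimed bijection and verify they are mutually inverse. In one direction, from a coquasitriangular $\sigma$ on $E=A\ltimes H$ I pass to a quadruple by restriction to the four ``corners'': $p(a,b):=\sigma(a\ltimes 1_H,\,b\ltimes 1_H)$, $u(a,g):=\sigma(a\ltimes 1_H,\,1_A\ltimes g)$, $v(h,b):=\sigma(1_A\ltimes h,\,b\ltimes 1_H)$ and $\tau(h,g):=\sigma(1_A\ltimes h,\,1_A\ltimes g)$. In the other direction, from such a quadruple I build $\sigma$ via \equref{39}. The structural facts I rely on throughout are that $a\mapsto a\ltimes 1_H$ is a morphism of Hopf algebras while $h\mapsto 1_A\ltimes h$ is a coalgebra map sending $1_H$ to the unit, together with the special products read off from \equref{10}, \equref{2}, \equref{3}: namely $(a\ltimes 1_H)\bullet(1_A\ltimes h)=a\ltimes h$, $(a\ltimes 1_H)\bullet(b\ltimes 1_H)=ab\ltimes 1_H$, $(b\ltimes 1_H)\bullet(1_A\ltimes h)=b\ltimes h$, $(1_A\ltimes h)\bullet(b\ltimes 1_H)=(h_{(1)}\triangleright b_{(1)})\ltimes(h_{(2)}\triangleleft b_{(2)})$ and $(1_A\ltimes h)\bullet(1_A\ltimes g)=f(h_{(1)},g_{(1)})\ltimes h_{(2)}\cdot g_{(2)}$.

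For the direction $(i)\Rightarrow(ii)$, the first step is to recover \equref{39} for the restricted data: writing $a\ltimes h=(a\ltimes 1_H)\bullet(1_A\ltimes h)$ and applying (BR1), then writing $b_{(i)}\ltimes g_{(i)}=(b_{(i)}\ltimes 1_H)\bullet(1_A\ltimes g_{(i)})$ and applying (BR3) to each of the two resulting factors, one obtains $\sigma(a\ltimes h,\,b\ltimes g)=u(a_{(1)},g_{(1)})\,p(a_{(2)},b_{(1)})\,\tau(h_{(1)},g_{(2)})\,v(h_{(2)},b_{(2)})$; in particular $\sigma$ is completely determined by its corners, so the restriction map is injective. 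Each remaining requirement on $(p,\tau,u,v)$ is then precisely one instance of (BR1)--(BR5) for $\sigma$ evaluated at corner arguments and simplified by the products above: the $A\otimes A$, $A\otimes H$, $H\otimes A$ and $H\otimes H$ ``corners'' of (BR1)--(BR4) give, respectively, that $(A,p)$ satisfies (BR1)--(BR4), the axioms (RS1)--(RS4), the axioms (LS1)--(LS4) and the axioms (SBR1)--(SBR4); (BR5) for $\sigma$ at $(a\ltimes 1_H,b\ltimes 1_H)$, $(1_A\ltimes h,1_A\ltimes g)$, $(1_A\ltimes h,b\ltimes 1_H)$ and $(a\ltimes 1_H,1_A\ltimes g)$ yields, respectively, (BR5) for $p$, both (SBR5) and \equref{4.3}, then \equref{4.1}, then \equref{4.2}; and finally \equref{4.4}, \equref{4.5} drop out of (BR3) applied with $lz=(1_A\ltimes g)\bullet(c\ltimes 1_H)$, while \equref{4.6}, \equref{4.7} drop out of (BR1) applied with $xy=(1_A\ltimes h)\bullet(b\ltimes 1_H)$.

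For the converse $(ii)\Rightarrow(i)$, I set $\sigma$ to be \equref{39} and verify (BR1)--(BR5). Conditions (BR2) and (BR4) are immediate from (RS2)/(RS4), (LS2)/(LS4), (SBR2)/(SBR4) and (BR2)/(BR4) for $p$. For (BR1) and (BR3) I would first observe that for $\sigma$ of the form \equref{39} one has, tautologically (after collapsing the $\varepsilon$'s coming from the unit normalizations), $\sigma(a\ltimes h,\,X)=\sigma(a\ltimes 1_H,\,X_{(1)})\,\sigma(1_A\ltimes h,\,X_{(2)})$ and symmetrically on the right; hence a routine induction on the length of a word in the corner generators reduces (BR1)/(BR3) to the cases in which one argument runs over $a\ltimes 1_H$ and $1_A\ltimes h$. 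In those cases one expands the relevant product by \equref{10}, re-expresses everything through \equref{39}, and matches Sweedler components using (RS1), (RS3), (LS1), (LS3), (SBR1), (SBR3), the identities \equref{4.2} and \equref{4.4}--\equref{4.7}, (BR1)/(BR3) for $p$, and the bialgebra-extending-structure axioms (BE1)--(BE5). For (BR5) one expands $p(X_{(1)},Y_{(1)})\,X_{(2)}Y_{(2)}$ and $Y_{(1)}X_{(1)}\,p(X_{(2)},Y_{(2)})$ with $X=a\ltimes h$, $Y=b\ltimes g$ by means of \equref{10} and \equref{11}, and collapses the two sides against each other using (SBR5), (BR5) for $p$, \equref{4.1}, \equref{4.2}, \equref{4.3} and the quasi-commutativity axioms (BE6), (BE7).

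Finally, the two assignments are inverse to each other: restricting the $\sigma$ of \equref{39} back to the corners returns $(p,\tau,u,v)$ by the unit normalizations (RS2)/(RS4), (LS2)/(LS4), (SBR2)/(SBR4) and (BR2)/(BR4) for $p$; and rebuilding $\sigma$ from the corners of a given coquasitriangular $\sigma$ via \equref{39} reproduces $\sigma$, by the factorization carried out in the $(i)\Rightarrow(ii)$ step. I expect the genuine difficulty to lie entirely in the computational core of $(ii)\Rightarrow(i)$ --- the verification of (BR1), (BR3) and, above all, (BR5) --- because the unified-product multiplication \equref{10} couples $\triangleright$, $\triangleleft$ and $f$ simultaneously, so each of these checks unfolds into a long manipulation of many Sweedler legs in which essentially all of the compatibilities (RS$*$), (LS$*$), (SBR$*$), \equref{4.1}--\equref{4.7} and (BE1)--(BE7) have to be brought to bear at once.
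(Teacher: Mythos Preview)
Your proposal is correct and follows essentially the same route as the paper: define the four ``corner'' maps by restriction, recover the factorization \equref{39} from (BR1)/(BR3), extract all the axioms on $(p,\tau,u,v)$ by specializing (BR1), (BR3), (BR5) to corner arguments, and for the converse verify (BR1)--(BR5) for the $\sigma$ of \equref{39} by a direct Sweedler computation. One minor point: in the converse direction the paper's explicit verifications of (BR1), (BR3), (BR5) use only the cocommutativity-type axioms (BE6) and (BE7) together with the listed pairing/braiding identities and \equref{4.1}--\equref{4.7}; the associativity-type axioms (BE1)--(BE5) are not actually needed, so your reduction step and the ensuing computation are lighter than you anticipate.
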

\begin{proof}
Suppose first that $(A \ltimes H, \sigma)$ is a coquasitriangular
Hopf algebra. We define the following linear maps:
\begin{eqnarray*}
p: A \otimes A \rightarrow k, \qquad p(a, b) &=& \sigma(a \otimes
1
, b \otimes 1)\\
\tau: H \otimes H \rightarrow k, \qquad \tau(h, g) &=& \sigma(1
\otimes h , 1 \otimes g)\\
u: A \otimes H \rightarrow k, \qquad u(a , h) &=& \sigma(a \otimes
1
, 1 \otimes h)\\
v: H \otimes A \rightarrow k, \qquad v(h, a) &=& \sigma(1 \otimes
h, a \otimes 1)
\end{eqnarray*}
Before going into the proof we collect here some compatibilities
satisfied by the maps defined above which will be useful in the
sequel. The following are just easy consequences of the fact that
$\sigma$ is a coquasitriangular structure on $A \ltimes \, H$ and,
hence, it satisfies the normalizing relations (BR2) and (BR4):
\begin{eqnarray}
\eqlabel{a} p(1 , b) = \varepsilon(b) = p(b , 1)\\
\eqlabel{b} \tau(1 , h) = \varepsilon(h) = \tau(h , 1)\\
\eqlabel{c} u(1 , h) = \varepsilon(h) , \quad u(a , 1) =
\varepsilon(a)\\
\eqlabel{d} v(1 , a) = \varepsilon(a) , \quad v(h , 1) =
\varepsilon(h)
\end{eqnarray}
Remark that from relation \equref{b} it follows that $\tau$
fulfills (SBR2) and (SBR4) while from relation \equref{d} we can
derive that $v$ fulfills (LS2) and (LS4).

First we prove that relation \equref{39} indeed holds:
\begin{eqnarray*}
&&\sigma(a \# h,\ b \# g) = \sigma \bigl((a \# 1)(1 \# h), (b \# 1)(1 \# g)\bigl) = \\
&\stackrel{(BR1)}{=}& \sigma \bigl(a \# 1, (b_{(1)} \# 1)(1 \#
g_{(1)})\bigl) \sigma \bigl((1 \# h), (b_{(2)} \# 1)(1 \#
g_{(2)})\bigl)\\
&\stackrel{(BR3)}{=}& \sigma (a_{(1)} \# 1 , 1 \# g_{(1)}) \sigma
(a_{(2)} \# 1 , b_{(1)} \# 1) \sigma (1 \# h_{(1)} , 1 \# g_{(2)})
\sigma \bigl(1 \# h_{(2)} , b_{(2)} \# 1)\\
&{=}&u(a_{(1)} , g_{(1)}) p(a_{(2)} , b_{(1)}) \tau(h_{(1)} ,
g_{(2)}) v(h_{(2)} , b_{(2)})
\end{eqnarray*}
Next we prove that $(A, p)$ is a coquasitriangular Hopf algebra,
$u$ is a generalized $(p,f)$ - right skew pairing on $(H, A)$, $v$
is a generalized $(p,f)$ - left skew pairing on $(A, H)$ and
$\tau$ is a generalized $(u, v)$ - skew braiding on $H$. Having in
mind that $(A \ltimes H, \sigma)$ is a coquasitriangular Hopf
algebra it is straightforward to see that $(A,p)$ is a
coquasitriangular Hopf algebra by considering $x = a \# 1$, $y = b
\# 1$ and $z = c \# 1$ in (BR1) $-$ (BR5).\\
Since $ \sigma$ satisfies (BR1), then for all $a$, $b$, $c \in A$
and $h$, $g$, $t \in H$ we have:
$$
\sigma \bigl(a(g_{(1)} \triangleright b_{(1)}) f(h_{(2)}
\triangleleft b_{(2)} , h_{(1)}) \# (g_{(3)} \triangleleft
b_{(3)}) \cdot h_{(2)} , c \# t \bigl) =
$$
\begin{eqnarray}\eqlabel{BR1}
= \sigma ( a \otimes g, c_{(1)} \otimes t_{(1)}) \sigma(b \otimes
h, c_{(2)} \otimes t_{(2)})
\end{eqnarray}
Moreover, since $\sigma$ also fulfills (BR3) we have:
$$
\sigma \bigl(a \# h , b(g_{(1)} \triangleright c_{(1)}) f(g_{(2)}
\triangleleft c_{(2)} , t_{(1)}) \# (g_{(3)} \triangleleft
c_{(3)}) \cdot t_{(2)} \bigl)  =
$$
\begin{eqnarray}\eqlabel{BR3}
= \sigma ( a_{(1)} \otimes h_{(1)} , c \otimes t) \sigma(a_{(2)}
\otimes h_{(2)}, b \otimes g)
\end{eqnarray}
Furthermore, by (BR5) we have:
$$\sigma(a_{(1)} \otimes h_{(1)} , b_{(1)} \otimes
g_{(1)}) a_{(2)}(h_{(2)} \triangleright b_{(2)}) f(h_{(3)}
\triangleleft b_{(3)}, g_{(2)}) \otimes (h_{(4)} \triangleleft
b_{(4)}) \cdot g_{(3)}$$
\begin{eqnarray}
\eqlabel{BR5} = b_{(1)} (g_{(1)} \triangleright a_{(1)}) f(g_{(2)}
\triangleleft a_{(2)} , h_{(1)}) \otimes (g_{(3)} \triangleleft
a_{(3)}) \cdot h_{(2)} \sigma(a_{(4)} \otimes h_{(3)} , b_{(2)}
\otimes g_{(4)})
\end{eqnarray}
By considering $h = g = 1$ and $c = 1$ in \equref{BR1} we get
relation (RS1). If we let $b = c = 1$ and $h = 1$ in \equref{BR3}
yields:
$$
u(a_{(1)} , g_{(3)} \cdot t_{(3)}) p\bigl(a_{(2)} , f(g_{(1)} ,
t_{(1)})\bigl) \tau(1 , g_{(4)}t_{(4)}) v\bigl(1 , f(g_{(2)} ,
t_{(2)})\bigl) = u(a_{(1)} , t) u(a_{(2)} , g)
$$
Now using relations \equref{b} and \equref{d} we get (RS3). Hence
we proved that $u$ is a generalized $(p,f)$ - right skew pairing
on $(H, A)$. Considering $a = b = 1$ and $t = 1$ in \equref{BR1}
yields:
\begin{eqnarray*}
u \bigl(f(h_{(1)} , g_{(1)}) , 1\bigl) p \bigl(f(h_{(2)} ,
g_{(2)}) , c_{(1)}\bigl) \tau(h_{(3)} \cdot g_{(3)} , 1) v(h_{(4)}
\cdot g_{(4)} , c_{(2)}) = v(h , c_{(1)}) v(g , c_{(2)})
\end{eqnarray*}
Using \equref{b} and \equref{c} we get that (LS1) holds for $v$.
Moreover from \equref{BR3} applied to $g = t = 1$ and $a = 1$ we
get that (LS3) also holds for $v$ and we proved that $v$ is indeed
a generalized $(p,f)$ - left skew pairing on $(A, H)$. Next we
apply \equref{BR1} for $a = b = c = 1$:
\begin{eqnarray*}
u \bigl(f(h_{(1)} , g_{(1)}) , t_{(1)}\bigl) p \bigl(f(h_{(2)} ,
g_{(2)}) , 1\bigl) \tau(h_{(3)} \cdot g_{(3)} , t_{(2)}) v(h_{(4)}
\cdot g_{(4)} , 1) = \tau(h , t_{(1)}) \tau(g , t_{(2)})
\end{eqnarray*}
Using \equref{a} and \equref{d} we obtain (SBR1). Now \equref{BR3}
applied for $a = b = c = 1$ yields:
\begin{eqnarray*}
u(1 , g_{(3)} \cdot t_{(3)}) p\bigl(1 , f(g_{(1)} , t_{(1)})\bigl)
v \bigl(h_{(2)} , f(g_{(2)} , t_{(2)})\bigl) \tau(h_{(1)} ,
g_{(4)} \cdot t_{(4)}) = \tau(h_{(1)} , t) \tau(h_{(2)} , g)
\end{eqnarray*}
From \equref{a} and \equref{c} we obtain that (SBR3) holds for
$\tau$. Considering $a = b = 1$ in \equref{BR5} we get:
\begin{eqnarray*}
\tau(h_{(1)} , g_{(1)})f(h_{(2)} , g_{(2)}) \# h_{(3)} \cdot
g_{(3)} = f(g_{(1)} , h_{(1)}) \# g_{(2)} \cdot h_{(2)}
\tau(h_{(3)} , g_{(3)})
\end{eqnarray*}
Having in mind that $f$ is a coalgebra map we obatin, by applying
$\varepsilon \otimes Id$, that (SBR5) holds for $\tau$ and
therefore $\tau$ is a generalized $(u, v)$ - skew braiding.\\
We still need to prove that the compatibilities \equref{4.1} -
\equref{4.7} hold. Compatibilities \equref{4.1} - \equref{4.2} are
obtained from \equref{BR5} by considering: $a = 1$ and $g = 1$
respectively $b = 1$ and $h = 1$ while \equref{4.3} can be derived
from \equref{BR5} by considering $a = b = 1$ and then applying $Id
\otimes \varepsilon$. The next two compatibilities, \equref{4.4}
and \equref{4.5}, can be obtained by considering $h = t = 1$ and
$b = 1$ respectively $a = b = 1$ and $t = 1$ in \equref{BR3}. To
this end, relations \equref{4.6} and \equref{4.7} can be derived
from \equref{BR1} by considering $g = t = 1$ and $a = 1$
respectively $a = c = 1$ and $g = 1$.

Assume now that $(A, p)$ is a coquasitriangular Hopf algebra, $u$
is a generalized $(p,f)$ - right skew pairing, $v$ is a
generalized $(p,f)$ - left skew pairing, $\tau$ is a generalized
$(u, v)$ - skew braiding and $\sigma$ is given by \equref{39} such
that compatibilities \equref{4.1} - \equref{4.7} are fulfilled.
Then, using relations (RS2), (SBR2), (LS2) and the fact that $p$
is a coquasitriangular structure we can prove that for all $a \in
A$, $h \in H$ we have:
\begin{eqnarray*}
\sigma(1 \# 1 , a \# h) &{=}& u(1, h_{(1)}) p(1 , a_{(1)}) \tau(1
, h_{(2)}) v(1 , a_{(2)})\\
&{=}& \varepsilon(a)
\varepsilon(h)\\
&{=}& \varepsilon(a \# h)
\end{eqnarray*}
Moreover, using relations (RS4), (SBR4), (LS4) and again the fact
that $p$ is a coquasitriangular structure, we also have:
\begin{eqnarray*}
\sigma(a \# h, 1 \# 1) &{=}& u(a_{(1)} , 1) p(a_{(2)} , 1)
\tau(h_{(1)} , 1) v(h_{(2)} , 1)\\
&{=}& \varepsilon(a)
\varepsilon(h)\\
&{=}& \varepsilon(a \# h)
\end{eqnarray*}
for all $a \in A$, $h \in H$. Hence $\sigma$ also fulfills (BR4).

To prove that $\sigma$ satisfies (BR1) we start by first computing
the left hand side. Thus for all  $a, b, c \in A$ and $h, g, t \in
H$ we have:

\begin{eqnarray*}
LHS &{=}& u\bigl(a_{(1)}(g_{(1)} \triangleright b_{(1)}) f(g_{(3)}
\triangleleft b_{(3)} , h_{(1)}) , t_{(1)}\bigl) v\bigl((g_{(6)}
\triangleleft b_{(6)}) \cdot h_{(4)}, c_{(2)}\bigl) \\
&& p \bigl(a_{(2)}(g_{(2)}
\triangleright b_{(2)}) f(g_{(4)} \triangleleft b_{(4)} , h_{(2)}) , c_{(1)}\bigl)\tau\bigl((g_{(5)} \triangleleft b_{(5)}) \cdot h_{(3)} , t_{(2)}\bigl)\\
&\stackrel{(RS1)}{=}& u(a_{(1)} , t_{(1)}) u(g_{(1)}
\triangleright b_{(1)} , t_{(2)}) u\bigl(f(g_{(3)} \triangleleft
b_{(3)} , h_{(1)}) , t_{(3)}\bigl)v \bigl((g_{(6)} \triangleleft
b_{(6)}) \cdot h_{(4)}, c_{(4)}\bigl)\\&& p(a_{(2)} , c_{(1)})
p\bigl(\underline{f(g_{(4)} \triangleleft b_{(4)} , h_{(2)})} ,
c_{(3)}\bigl) \tau \bigl(\underline{(g_{(5)} \triangleleft
b_{(5)}) \cdot h_{(3)}} , t_{(4)}\bigl) p(g_{(2)}
\triangleright b_{(2)} , c_{(2)})\\
&\stackrel{(BE7)}{=}& u(a_{(1)} , t_{(1)}) u(g_{(1)}
\triangleright b_{(1)} , t_{(2)}) u\bigl(f(g_{(3)} \triangleleft
b_{(3)} , h_{(1)}) , t_{(3)}\bigl)\underline{v \bigl((g_{(6)}
\triangleleft b_{(6)}) \cdot h_{(4)}, c_{(4)}\bigl)}\\&& p(a_{(2)}
, c_{(1)}) \underline{p\bigl(f(g_{(5)} \triangleleft b_{(5)} ,
h_{(3)}) , c_{(3)}\bigl)} \tau \bigl((g_{(4)} \triangleleft
b_{(4)}) \cdot
h_{(2)} , t_{(4)}\bigl)p(g_{(2)} \triangleright b_{(2)} , c_{(2)})\\
&\stackrel{(LS1)}{=}& u(a_{(1)} , t_{(1)}) u(g_{(1)}
\triangleright b_{(1)} , t_{(2)}) \underline{u\bigl(f(g_{(3)}
\triangleleft b_{(3)} , h_{(1)}) , t_{(3)}\bigl)} p(a_{(2)} ,
c_{(1)})\\&& p(g_{(2)} \triangleright b_{(2)} , c_{(2)})
\underline{\tau \bigl((g_{(4)} \triangleleft b_{(4)}) \cdot
h_{(2)} , t_{(4)}\bigl)} v(g_{(5)} \triangleleft b_{(5)} ,
c_{(3)})
v(h_{(3)} , c_{(4)})\\
&\stackrel{(SBR1)}{=}& u(a_{(1)} , t_{(1)}) u(g_{(1)}
\triangleright b_{(1)} , t_{(2)}) p(a_{(2)} , c_{(1)}) p(g_{(2)}
\triangleright b_{(2)} , c_{(2)})\\
&& \tau(g_{(3)} \triangleleft b_{(3)} , t_{(3)}) \tau(h_{(1)} ,
t_{(4)}) v(g_{(4)} \triangleleft b_{(4)} ,
c_{(3)}) v(h_{(2)} , c_{(4)})\\
&\stackrel{(BE6)}{=}& u(a_{(1)} , t_{(1)}) u(g_{(1)}
\triangleright b_{(1)} , t_{(2)}) p(a_{(2)} , c_{(1)})
\underline{p(g_{(3)}
\triangleright b_{(3)} , c_{(2)})}\\
&& \tau(g_{(2)} \triangleleft b_{(2)} , t_{(3)}) \tau(h_{(1)} ,
t_{(4)}) \underline{v(g_{(4)} \triangleleft b_{(4)} ,
c_{(3)})} v(h_{(2)} , c_{(4)})\\
&\stackrel{\equref{4.6}}{=}& u(a_{(1)} , t_{(1)})
\underline{u(g_{(1)} \triangleright b_{(1)} , t_{(2)})} p(a_{(2)}
, c_{(1)})
\underline{\tau(g_{(2)} \triangleleft b_{(2)} , t_{(3)})} \tau(h_{(1)} , t_{(4)})\\
&& v(g_{(3)} , c_{(2)}) p(b_{(3)} , c_{(3)}) v(h_{(2)} , c_{(4)})\\
&\stackrel{\equref{4.7}}{=}& u(a_{(1)} , t_{(1)})
 p(a_{(2)}
, c_{(1)}) \tau(g_{(1)} , t_{(2)}) u(b_{(1)} , t_{(3)}) \tau(h_{(1)} , t_{(4)})\\
&& v(g_{(2)} , c_{(2)}) p(b_{(2)} , c_{(3)}) v(h_{(2)} , c_{(4)})\\
&{=}& RHS
\end{eqnarray*}
where in the second equality we also used the fact that $p$ is a
coquasitriangular structure. To prove (BR3) we start again by
computing the left hand side. Thus for all $a, b, c \in A$ and $h,
g, t \in H$ we have:
\begin{eqnarray*}
LHS &{=}& u\bigl((a_{(1)} , (g_{(5)} \triangleleft c_{(5)}) \cdot
t_{(3)}\bigl) \underline{p\bigl(a_{(2)} , b_{(1)}(g_{(1)}
\triangleright c_{(1)}) f(g_{(3)} \triangleleft c_{(3)} ,
t_{(1)})\bigl)}\\&& \tau\bigl(h_{(1)} , (g_{(6)} \triangleleft
c_{(6)}) \cdot t_{(4)}\bigl)\underline{v\bigl(h_{(2)} ,
b_{(2)}(g_{(2)} \triangleright
c_{(2)}) f(g_{(4)} \triangleleft c_{(4)} , t_{(2)})\bigl)}\\
&\stackrel{(RS3)}{=}& u\bigl(a_{(1)} , \underline{(g_{(5)}
\triangleleft c_{(5)}) \cdot t_{(3)}}\bigl) p\bigl(a_{(2)} ,
f(g_{(3)} \triangleleft c_{(3)} , t_{(1)})\bigl) p(a_{(3)} ,
g_{(1)}
\triangleright c_{(1)}) p(a_{(4)} , b_{(1)})\\
&& \tau\bigl(h_{(1)} , (g_{(6)} \triangleleft c_{(6)}) \cdot
t_{(4)}\bigl)v\bigl(h_{(2)} , \underline{f(g_{(4)} \triangleleft
c_{(4)} , t_{(2)})}\bigl) v(h_{(3)} , g_{(2)} \triangleright
c_{(2)}) v(h_{(4)} , b_{(2)})\\
&\stackrel{(BE7)}{=}&
u\bigl(a_{(1)} , (g_{(4)} \triangleleft c_{(4)}) \cdot
t_{(2)}\bigl) p\bigl(a_{(2)} , f(g_{(3)} \triangleleft c_{(3)} ,
t_{(1)})\bigl) p(a_{(3)} , g_{(1)}
\triangleright c_{(1)}) p(a_{(4)} , b_{(1)})\\
&& \underline{\tau\bigl(h_{(1)} , (g_{(6)} \triangleleft c_{(6)})
\cdot t_{(4)}\bigl)v\bigl(h_{(2)} , f(g_{(5)} \triangleleft
c_{(5)} , t_{(3)})\bigl)} v(h_{(3)} , g_{(2)} \triangleright
c_{(2)}) v(h_{(4)} , b_{(2)})
\end{eqnarray*}
\begin{eqnarray*}
&\stackrel{(SBR3)}{=}& \underline{u\bigl(a_{(1)} , (g_{(4)}
\triangleleft c_{(4)}) \cdot t_{(2)}\bigl) p\bigl(a_{(2)} ,
f(g_{(3)} \triangleleft c_{(3)} , t_{(1)})\bigl)} p(a_{(3)} ,
g_{(1)}
\triangleright c_{(1)}) p(a_{(4)} , b_{(1)})\\
&& \tau(h_{(1)} , t_{(3)})\tau(h_{(2)} , g_{(5)} \triangleleft
c_{(5)}) v(h_{(3)} , g_{(2)} \triangleright c_{(2)}) v(h_{(4)} ,
b_{(2)})\\
&\stackrel{(RS3)}{=}& u(a_{(1)} , t_{(1)}) u(a_{(2)} ,
\underline{g_{(3)} \triangleleft c_{(3)}}) p(a_{(3)} , g_{(1)}
\triangleright c_{(1)}) p(a_{(4)} , b_{(1)})
\tau(h_{(1)} , t_{(2)})\\
&&\tau(h_{(2)} , g_{(4)} \triangleleft c_{(4)}) v(h_{(3)} ,
\underline{g_{(2)}
\triangleright c_{(2)}}) v(h_{(4)} , b_{(2)})\\
&\stackrel{(BE6)}{=}& u(a_{(1)} , t_{(1)}) \underline{u(a_{(2)} ,
g_{(2)} \triangleleft c_{(2)}) p(a_{(3)} , g_{(1)} \triangleright
c_{(1)})} p(a_{(4)} , b_{(1)})
\tau(h_{(1)} , t_{(2)})\\
&&\tau(h_{(2)} , g_{(4)} \triangleleft c_{(4)}) v(h_{(3)} ,
g_{(3)}
\triangleright c_{(3)}) v(h_{(4)} , b_{(2)})\\
&\stackrel{\equref{4.4}}{=}& u(a_{(1)} , t_{(1)})p(a_{(2)} ,
c_{(1)})u(a_{(3)} , g_{(1)}) p(a_{(4)} , b_{(1)})
\tau(h_{(1)} , t_{(2)})\\
&&\underline{\tau(h_{(2)} , g_{(3)} \triangleleft c_{(3)})
v(h_{(3)} , g_{(2)}
\triangleright c_{(2)})} v(h_{(4)} , b_{(2)})\\
&\stackrel{\equref{4.5}}{=}& u(a_{(1)} , t_{(1)})p(a_{(2)} ,
c_{(1)})u(a_{(3)} , g_{(1)}) p(a_{(4)} , b_{(1)})
\tau(h_{(1)} , t_{(2)})\\
&&v(h_{(2)} , c_{(2)})\tau(h_{(3)} , g_{(2)}) v(h_{(4)} , b_{(2)})\\
&{=}& RHS
\end{eqnarray*}
Note that in the second equality we used the fact that $p$ is a
coquasitriangular structure. In order to show that $\sigma$ also
fulfills (BR5) we need the following compatibilities that can be
easily derived from \equref{4.1} and \equref{4.2} by applying
$\varepsilon \otimes Id$:
\begin{eqnarray}
\eqlabel{1.2} v(h_{(1)} , b_{(1)})(h_{(2)} \triangleleft b_{(2)}) &{=}& h_{(1)}v(h_{(2)} , b)\\
\eqlabel{2.2} (g_{(1)} \triangleleft a_{(1)}) u(a_{(2)} , g_{(2)})
&{=}& u(a , g_{(1)}) g_{(2)}
\end{eqnarray}
Computing the left hand side of (BR5) we obtain:
\begin{eqnarray*}
LHS&{=}&u(a_{(1)} , g_{(1)})p(a_{(2)} , b_{(1)}) \tau(h_{(1)} ,
g_{(2)}) v(h_{(2)} , b_{(2)})a_{(3)}(h_{(3)} \triangleright
b_{(3)})\\&& \underline{f(h_{(4)} \triangleleft b_{(4)} , g_{(3)})} \# \underline{(h_{(5)} \triangleleft b_{(5)}) \cdot g_{(4)}}\\
&\stackrel{(BE7)}{=}& u(a_{(1)} , g_{(1)})p(a_{(2)} , b_{(1)})
\tau(h_{(1)} , g_{(2)}) \underline{v(h_{(2)} ,
b_{(2)})}a_{(3)}\underline{(h_{(3)} \triangleright
b_{(3)})}\\&& f(h_{(5)} \triangleleft b_{(5)}, g_{(4)}) \# \underline{(h_{(4)} \triangleleft b_{(4)})} \cdot g_{(3)}\\
&\stackrel{\equref{4.1}}{=}& u(a_{(1)} ,
g_{(1)})\underline{p(a_{(2)} ,
b_{(1)})} \tau(h_{(1)} , g_{(2)})\underline{a_{(3)}b_{(2)}}f(h_{(4)} \triangleleft b_{(4)} , g_{(4)})\# \\&&  h_{(2)} \cdot g_{(3)}v(h_{(3)} , b_{(3)})\\
&{=}& u(a_{(1)} , g_{(1)})\underline{\tau(h_{(1)} , g_{(2)})}b_{(1)}a_{(2)}p(a_{(3)} , b_{(2)})f(h_{(4)} \triangleleft b_{(4)} , g_{(4)})\#\\&& \underline{h_{(2)} \cdot g_{(3)}}v(h_{(3)} , b_{(3)})\\
&\stackrel{(SBR5)}{=}& \underline{u(a_{(1)} ,
g_{(1)})}b_{(1)}\underline{a_{(2)}}p(a_{(3)} , b_{(2)}) f(h_{(4)}
\triangleleft b_{(4)}, g_{(4)})\# \underline{(g_{(2)}} \cdot
h_{(1)})\\&& \tau(h_{(2)} , g_{(3)})v(h_{(3)} , b_{(3)})
\end{eqnarray*}
\begin{eqnarray*}
&\stackrel{\equref{4.2}}{=}& b_{(1)}(g_{(1)} \triangleright
a_{(1)})p(a_{(4)} , b_{(2)})
f(h_{(4)} \triangleleft b_{(4)} , g_{(5)})\#\underline{(g_{(2)} \triangleleft a_{(2)})} \cdot h_{(1)}\\&& \underline{u(a_{(3)} , g_{(3)})}\tau(h_{(2)} , g_{(4)})v(h_{(3)} , b_{(3)})\\
&\stackrel{\equref{2.2}}{=}& b_{(1)}(g_{(1)} \triangleright
a_{(1)})p(a_{(3)} , b_{(2)})
f(\underline{h_{(4)} \triangleleft b_{(4)}}, g_{(5)})\# (g_{(3)} \cdot h_{(1)})u(a_{(2)} , g_{(2)})\\&& \tau(h_{(2)} , g_{(4)})\underline{v(h_{(3)} , b_{(3)})}\\
&\stackrel{\equref{1.2}}{=}& b_{(1)}(g_{(1)} \triangleright
a_{(1)})p(a_{(3)} , b_{(2)}) \underline{f(h_{(3)} , g_{(5)})}\#
(g_{(3)} \cdot h_{(1)})u(a_{(2)} , g_{(2)})\\&&
\underline{\tau(h_{(2)} , g_{(4)})} v(h_{(4)} , b_{(3)})\\
&\stackrel{\equref{4.3}}{=}& b_{(1)}(g_{(1)} \triangleright
a_{(1)})p(a_{(3)} , b_{(2)})v(h_{(4)} , b_{(3)})\tau(h_{(3)} ,
g_{(5)})\underline{f(g_{(4)} , h_{(2)})}\# \\
&&(\underline{g_{(3)} \cdot h_{(1)}})u(a_{(2)} , g_{(2)})\\
&\stackrel{(BE7)}{=}& b_{(1)}(g_{(1)} \triangleright
a_{(1)})p(a_{(3)} , b_{(2)})v(h_{(4)} , b_{(3)})\tau(h_{(3)} ,
g_{(5)})f(\underline{g_{(3)}} , h_{(1)})\# \\
&& (g_{(4)} \cdot h_{(2)})\underline{u(a_{(2)} , g_{(2)})}\\
&\stackrel{\equref{2.2}}{=}& b_{(1)}(g_{(1)} \triangleright
a_{(1)})p(a_{(4)} , b_{(2)})v(h_{(4)} , b_{(3)})\tau(h_{(3)} ,
g_{(5)})f(g_{(2)} \triangleleft a_{(2)} , h_{(1)})\# \\
&&(\underline{g_{(4)}} \cdot h_{(2)})\underline{u(a_{(3)} , g_{(3)})}\\
&\stackrel{\equref{2.2}}{=}& b_{(1)}(g_{(1)} \triangleright
a_{(1)})p(a_{(5)} , b_{(2)})v(h_{(4)} , b_{(3)})\tau(h_{(3)} ,
g_{(5)})f(g_{(2)} \triangleleft a_{(2)} , h_{(1)}) \#\\
&&  \bigl((g_{(3)} \triangleleft a_{(3)}) \cdot h_{(2)}\bigl) u(a_{(4)} , g_{(4})\\
&{=}&RHS
\end{eqnarray*}
In the forth equality we used the fact that $p$ is a
coquasitriangular structure. Thus (BR5) holds for $\sigma$ and
this ends the proof.
\end{proof}

The following result which characterizes the coquasitriangular
structures on a double cross product can be obtained from
\thref{4.5} by considering $f = \varepsilon \ot \varepsilon$ to be
the trivial cocycle.

\begin{corollary}\colabel{2.7}
Let $A \bowtie H$ be a double cross product of Hopf algebras.
There is a bijective correspondence between:

$(i)$ The set of all coquasitriangular structures $\sigma$ on the
double cross product $A \bowtie H$;

$(ii)$ The set of all quadruples $(p, \tau, u, v)$, where $p: A
\otimes A \rightarrow k$, $\tau: H \otimes H \rightarrow k$, $u: A
\otimes H \rightarrow k$, $v: H \otimes A \rightarrow k$ are
linear maps such that $(A, p)$ and $(H, \tau)$ are
coquasitriangular Hopf algebras, $u$ and $v$ are skew pairings on
$(A, H)$ respectively on $(H, A)$ and the following
compatibilities are fulfilled:
\begin{eqnarray*}
v(h_{(1)} , b_{(1)}) (h_{(2)} \triangleright b_{(2)}) \otimes
(h_{(3)} \triangleleft b_{(3)}) &{=}& b_{(1)} \otimes
h_{(1)}v(h_{(2)} ,
b_{(2)})\\
(g_{(1)} \triangleright a_{(1)}) \otimes (g_{(2)} \triangleleft
a_{(2)}) u(a_{(3)} , g_{(3)}) &{=}& u(a_{(1)} , g_{(1)})a_{(2)}
\otimes
g_{(2)}\\
u(a_{(1)} , g_{(2)} \triangleleft c_{(2)}) p(a_{(2)} , g_{(1)}
\triangleright c_{(1)}) &{=}& p(a_{(1)} , c) u(a_{(2)} , g)
\end{eqnarray*}
\begin{eqnarray*}
\tau(h_{(1)} , g_{(2)} \triangleleft c_{(2)}) v(h_{(2)} , g_{(1)}
\triangleright c_{(1)}) &{=}& v(h_{(1)} , c) \tau(h_{(2)} , g)\\
p(h_{(1)} \triangleright b_{(1)}, c_{(1)}) v(h_{(2)} \triangleleft
b_{(2)} ,
c_{(2)}) &{=}& v(h , c_{(1)}) p(b , c_{(2)})\\
u(h_{(1)} \triangleright b_{(1)}, t_{(1)}) \tau(h_{(2)}
\triangleleft b_{(2)} , t_{(2)}) &{=}& \tau(h , t_{(1)}) u(b ,
t_{(2)})
\end{eqnarray*}
Under the above bijection the coquasitriangular structure $\sigma:
(A \bowtie H) \otimes (A \bowtie H) \rightarrow k$ corresponding
to $(p, \tau, u, v)$ is given by:
\begin{equation}\eqlabel{23}
\sigma(a \ltimes h, b \ltimes g) = u(a_{(1)} , g_{(1)}) p(a_{(2)}
, b_{(1)}) \tau(h_{(1)} , g_{(2)}) v(h_{(2)} , b_{(2)})
\end{equation}
for all $a, b, c \in A$ and $h, g, t \in H$.
\end{corollary}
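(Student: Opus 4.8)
The plan is to deduce \coref{2.7} directly from \thref{4.5} by specializing to the trivial cocycle. First I would invoke \exref{3exemple}(1): a double cross product $A \bowtie H$ of Hopf algebras is precisely the unified product $A \ltimes H$ attached to the Hopf algebra extending structure $\Omega(A) = (H, \triangleleft, \triangleright, f)$ in which $H$ is a Hopf algebra, $(A, H, \triangleleft, \triangleright)$ is a matched pair, and $f$ is the trivial cocycle $f(g, h) = \varepsilon_H(g)\varepsilon_H(h)1_A$. Thus \thref{4.5} applies verbatim, and the only work is to rewrite the datum $(p, \tau, u, v)$ of part $(ii)$ under this hypothesis on $f$.

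Next I would show that the three ``generalized'' notions degenerate to their classical counterparts. For the right skew pairing $u : A \ot H \to k$, substituting $f(g_{(1)}, t_{(1)}) = \varepsilon_H(g_{(1)})\varepsilon_H(t_{(1)})1_A$ into (RS3) and using $p(a_{(2)}, 1_A) = \varepsilon(a_{(2)})$ collapses it to $u(a, g \cdot t) = u(a_{(1)}, t)u(a_{(2)}, g)$; together with (RS1), (RS2), (RS4), which are literally (BR1), (BR2), (BR4), this says exactly that $u$ is a skew pairing on $(A, H)$. Symmetrically, inserting the trivial $f$ into (LS1) and using $p(1_A, c_{(1)}) = \varepsilon(c_{(1)})$ turns (LS1)--(LS4) into the skew pairing axioms for $v$ on $(H, A)$. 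For the skew braiding $\tau$, plugging the trivial $f$ into (SBR1) and using $u(1_A, t_{(1)}) = \varepsilon(t_{(1)})$ gives (BR1), plugging it into (SBR3) and using $v(h_{(2)}, 1_A) = \varepsilon(h_{(2)})$ gives (BR3), while (SBR2), (SBR4), (SBR5) are (BR2), (BR4), (BR5); hence $(H, \tau)$ is a coquasitriangular Hopf algebra. This is exactly the data described in \coref{2.7}(ii). (Alternatively one may just quote the two Remarks preceding \thref{4.5}, which record these very equivalences.)

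It then remains to treat the seven compatibilities \equref{4.1}--\equref{4.7}. The six relations \equref{4.1}, \equref{4.2}, \equref{4.4}, \equref{4.5}, \equref{4.6}, \equref{4.7} make no reference to $f$, so they pass over unchanged and coincide with the six displayed equations in \coref{2.7}(ii). The remaining relation \equref{4.3}, namely $\tau(h_{(1)}, g_{(1)})f(h_{(2)}, g_{(2)}) = f(g_{(1)}, h_{(1)})\tau(h_{(2)}, g_{(2)})$, becomes the identity $\tau(h, g)1_A = \tau(h, g)1_A$ once the trivial $f$ is inserted, hence is vacuous and disappears. Finally, formula \equref{39} for $\sigma$ contains no occurrence of $f$, so it specializes verbatim to \equref{23}. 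Combining these observations with \thref{4.5} yields the claimed bijection. There is no genuine obstacle here: the only care needed is the bookkeeping of checking that the appropriate normalization conditions --- $p(\cdot, 1) = p(1, \cdot) = \varepsilon$, together with (RS2) and (LS2) --- are applied correctly when collapsing (RS3), (SBR1), (SBR3) and (LS1).
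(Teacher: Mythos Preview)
Your proposal is correct and follows exactly the paper's approach: the paper simply states that \coref{2.7} ``can be obtained from \thref{4.5} by considering $f = \varepsilon \ot \varepsilon$ to be the trivial cocycle'' and gives no further argument. You have merely spelled out in detail what the paper leaves to the reader, namely that the generalized notions collapse to the classical ones (as already noted in the two Remarks) and that \equref{4.3} becomes vacuous.
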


\section{Applications: coquasitriangular structures on generalized quantum doubles}

Let $A$ and $H$ be two Hopf algebras and $\lambda: H \ot A
\rightarrow k$ be a skew pairing. Then $(A, H)$ is a matched pair
of Hopf algebras with the following two actions:
\begin{eqnarray*}
h \triangleleft a &{=}& h_{(2)} \lambda^{-1}(h_{(1)} , a_{(1)})
\lambda(h_{(3)} , a_{(2)})\\
h \triangleright a &{=}& a_{(2)} \lambda^{-1}(h_{(1)} , a_{(1)})
\lambda(h_{(2)} , a_{(3)})
\end{eqnarray*}
The corresponding double cross product is called the
\textit{generalized quantum double} and it will be denoted by $A
\bowtie_{\lambda} H$ (\cite[Example 7.2.6]{majid}). As a special
case of \coref{2.7} we get:

\begin{theorem}\thlabel{3.1}
Let $A$ and $H$ be two Hopf algebras and $\lambda: H \ot A
\rightarrow k$ be a skew pairing. There is a bijective
correspondence between:

$(i)$ The set of all coquasitriangular structures $\sigma$ on the
generalized quantum double $A \bowtie _{\lambda} H$;

$(ii)$ The set of all quadruples $(p, \tau, u, v)$, where $p: A
\otimes A \rightarrow k$, $\tau: H \otimes H \rightarrow k$, $u: A
\otimes H \rightarrow k$, $v: H \otimes A \rightarrow k$ are
linear maps such that $(A, p)$ and $(H, \tau)$ are
coquasitriangular Hopf algebras, $u$ and $v$ are skew pairings on
$(A, H)$ respectively on $(H, A)$ and the following
compatibilities are fulfilled:
\begin{eqnarray}
\eqlabel{24} v(h_{(1)}, b_{(1)}) b_{(3)} \ot \lambda^{-1}(h_{(2)},
b_{(2)}) \lambda(h_{(4)}, b_{(4)}) h_{(3)} &{=}& b_{(1)} \ot
h_{(1)}
v(h_{(2)}, b_{(2)})\\
\eqlabel{25} a_{(2)} \lambda^{-1}(g_{(1)}, a_{(1)})
\lambda(g_{(3)}, a_{(3)}) \ot g_{(2)} u(a_{(4)}, g_{(4)}) &{=}&
u(a_{(1)}, g_{(1)}) a_{(2)}
\ot g_{(2)}\\
\eqlabel{26} u(a_{(1)}, g_{(2)}) \lambda(g_{(3)}, c_{(3)})p
(a_{(2)}, c_{(2)}) \lambda^{-1}(g_{(1)}, c_{(1)}) &{=}&
p(a_{(1)}, c)u(a_{(2)}, g)\\
\eqlabel{27}  \tau(h_{(1)}, g_{(2)}) \lambda(g_{(3)}, c_{(3)})
v(h_{(2)}, c_{(2)}) \lambda^{-1}(g_{(1)}, c_{(1)}) &{=}&
v(h_{(1)}, c)\tau(h_{(2)}, g)\\
\eqlabel{28} p(b_{(2)}, c_{(1)}) \lambda^{-1}(h_{(1)}, b_{(1)})
v(h_{(2)}, c_{(2)}) \lambda(h_{(3)}, b_{(3)}) &{=}& p(b, c_{(2)}) \lambda(h, c_{(1)})\\
\eqlabel{29} u(b_{(2)}, t_{(1)}) \lambda^{-1}(h_{(1)}, b_{(1)})
\tau(h_{(2)}, t_{(2)}) \lambda(h_{(3)}, b_{(3)}) &{=}& \tau(h,
t_{(1)}) u(b, t_{(2)})
\end{eqnarray}
Under this correspondence the coquasitriangular structure $\sigma:
(A \bowtie_{\lambda} H) \otimes (A \bowtie_{\lambda} H)
\rightarrow k$ corresponding to $(p, \tau, u, v)$ is given by:
\begin{equation}\eqlabel{30}
\sigma(a \ot h, b \ot g) = u(a_{(1)} , g_{(1)}) p(a_{(2)} ,
b_{(1)}) \tau(h_{(1)} , g_{(2)}) v(h_{(2)} , b_{(2)})
\end{equation}
for all $a, b, c \in A$ and $h, g, t \in H$.
\end{theorem}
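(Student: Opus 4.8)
The plan is to derive \thref{3.1} as a direct specialization of \coref{2.7} (equivalently, of \thref{4.5} with trivial cocycle). By construction --- see \cite[Example 7.2.6]{majid} and the discussion preceding the statement --- the generalized quantum double $A \bowtie_{\lambda} H$ is exactly the double cross product $A \bowtie H$ attached to the matched pair $(A, H, \triangleright, \triangleleft)$ with
$$
h \triangleright a = a_{(2)} \lambda^{-1}(h_{(1)}, a_{(1)}) \lambda(h_{(2)}, a_{(3)}), \qquad h \triangleleft a = h_{(2)} \lambda^{-1}(h_{(1)}, a_{(1)}) \lambda(h_{(3)}, a_{(2)}),
$$
i.e. with the trivial cocycle $f = \varepsilon \otimes \varepsilon$. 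Since $f$ is trivial, the Remarks following the definitions of generalized skew pairing and generalized skew braiding show that the quadruples $(p, \tau, u, v)$ occurring in \coref{2.7} are precisely those for which $(A, p)$ and $(H, \tau)$ are coquasitriangular Hopf algebras and $u$, $v$ are skew pairings on $(A, H)$, respectively on $(H, A)$. Thus part $(ii)$ of \thref{3.1}, together with the formula \equref{30}, agrees verbatim with part $(ii)$ of \coref{2.7} and with \equref{23}, once the six compatibility relations of \coref{2.7} are rewritten in terms of $\lambda$.

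The core of the proof is therefore this rewriting: each of the six compatibilities of \coref{2.7} --- namely \equref{4.1}, \equref{4.2} and \equref{4.4}--\equref{4.7} specialized to $f = \varepsilon \otimes \varepsilon$ --- is treated separately by substituting the formulas for $\triangleright$ and $\triangleleft$ above and then simplifying. The only structural input needed is that $\lambda$ and $\lambda^{-1}$ are mutually inverse in convolution,
$$
\lambda(h_{(1)}, a_{(1)})\lambda^{-1}(h_{(2)}, a_{(2)}) = \lambda^{-1}(h_{(1)}, a_{(1)})\lambda(h_{(2)}, a_{(2)}) = \varepsilon(h)\varepsilon(a),
$$
used together with coassociativity of $\Delta_A$ and $\Delta_H$. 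In each case the substitution introduces, on the side of the identity where the actions appear, four scalar factors of the form $\lambda^{\pm 1}$, exactly two of which end up attached to a pair of adjacent Sweedler components of $h$ and of $a$; merging those components by coassociativity collapses that pair to $\varepsilon(h_{(i)})\varepsilon(a_{(j)})$, and re-indexing the surviving components produces the displayed relation. For instance, \equref{4.1} becomes
$$
v(h_{(1)}, b_{(1)})\lambda^{-1}(h_{(2)}, b_{(2)})\lambda(h_{(3)}, b_{(4)})\lambda^{-1}(h_{(4)}, b_{(5)})\lambda(h_{(6)}, b_{(6)})\,b_{(3)} \otimes h_{(5)} = b_{(1)} \otimes h_{(1)} v(h_{(2)}, b_{(2)}),
$$
and cancelling $\lambda(h_{(3)}, b_{(4)})\lambda^{-1}(h_{(4)}, b_{(5)})$ yields exactly \equref{24}; the remaining five compatibilities are handled identically and produce \equref{25}--\equref{29}. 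Since every step of this reduction is a reversible rewriting --- it only invokes the substitution of the actions and the convolution identity above --- the six compatibilities of \coref{2.7} hold if and only if \equref{24}--\equref{29} do, which gives the asserted bijection in both directions, the corresponding coquasitriangular structure \equref{30} being read off from \equref{23}.

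The only genuine obstacle is bookkeeping: before the cancellations one must carry as many as six Sweedler components of each of $h$ and $b$, and one has to check in each of the six cases that the two $\lambda^{\pm 1}$ factors being collapsed really do sit on adjacent components, so that coassociativity applies cleanly. Beyond this there is no difficulty --- in particular no extra compatibility is generated and none of those in \coref{2.7} degenerates, since the actions $\triangleright$, $\triangleleft$ of the generalized quantum double genuinely involve both $\lambda$ and $\lambda^{-1}$ --- so, granted \coref{2.7}, the proof of \thref{3.1} reduces to this computation.
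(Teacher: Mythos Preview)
Your proposal is correct and follows exactly the route indicated in the paper: the paper presents \thref{3.1} simply as ``a special case of \coref{2.7}'' without writing out any proof, so your explicit substitution of the actions $\triangleright,\triangleleft$ defined by $\lambda$ into the six compatibilities of \coref{2.7}, followed by cancellation via the convolution identity $\lambda*\lambda^{-1}=\varepsilon\otimes\varepsilon$, is precisely the computation the paper leaves implicit. Your sample derivation of \equref{24} from \equref{4.1} is accurate, and the remaining five relations are indeed handled the same way.
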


\begin{theorem}\thlabel{3.2}
Let $(A, p)$ and $(H, \tau)$ be two coquasitriangular Hopf
algebras and $\lambda: H \ot A \rightarrow k$ be a skew pairing.
Then the generalized quantum double $A \bowtie_{\lambda} H$ is a
coquasitriangular Hopf algebra with the coquasitriangular
structure given by:
\begin{equation}
\sigma(a \bowtie h , b \bowtie g) = \lambda\bigl(S(g_{(1)} ,
a_{(1)})\bigl)p(a_{(2)} , b_{(1)}) \tau(h_{(1)} , g_{(2)})
\lambda(h_{(2)} , b_{(2)})
\end{equation}
\end{theorem}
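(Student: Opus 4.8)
The plan is to deduce the statement from \thref{3.1} by exhibiting an explicit quadruple. Put $u:A\otimes H\to k$, $u(a,g):=\lambda^{-1}(g,a)=\lambda\bigl(S_H(g),a\bigr)$, and $v:H\otimes A\to k$, $v(h,b):=\lambda(h,b)$. Then $v$ is a skew pairing on $(H,A)$ by hypothesis, while $u=\lambda\circ(S_H\otimes\Id)\circ\nu$ is a skew pairing on $(A,H)$ by the observation recorded in the preliminary section (that $\lambda\circ(S\otimes\Id)\circ\nu$ is a skew pairing on $(H,A)$ whenever $\lambda$ is one on $(A,H)$), applied with the roles of $A$ and $H$ interchanged. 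Since $(A,p)$ and $(H,\tau)$ are coquasitriangular by assumption, \thref{3.1} yields the conclusion once we check that the six compatibilities \equref{24}--\equref{29} hold for this quadruple; the displayed formula for $\sigma$ is then precisely \equref{30} after substituting $u(a_{(1)},g_{(1)})=\lambda(S(g_{(1)}),a_{(1)})$ and $v(h_{(2)},b_{(2)})=\lambda(h_{(2)},b_{(2)})$.

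For \equref{24} and \equref{25} I would substitute $v=\lambda$ and $u=\lambda^{-1}\circ\nu$, reorganize the scalar factors, and observe that an adjacent pair $\lambda(g_{(i)},-)\,\lambda^{-1}(g_{(i+1)},-)$ (respectively $\lambda^{-1}(g_{(i)},-)\,\lambda(g_{(i+1)},-)$) collapses to $\varepsilon\otimes\varepsilon$ by the convolution-inverse identity $\lambda*\lambda^{-1}=\varepsilon_{H\otimes A}=\lambda^{-1}*\lambda$ together with coassociativity; each side then reduces to $b_{(1)}\otimes h_{(1)}v(h_{(2)},b_{(2)})$, respectively $u(a_{(1)},g_{(1)})a_{(2)}\otimes g_{(2)}$, so these two identities are immediate.

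The substance of the proof is \equref{26}--\equref{29}. For each of these I would expand the left-hand side in Sweedler notation with $u$ and $v$ rewritten in terms of $\lambda$ and $\lambda^{-1}$, and then follow the same three-step pattern: first use the skew-pairing axioms (BR1) and (BR3) for $\lambda$ and $\lambda^{-1}$ to merge two pairings into a single pairing against a product (e.g.\ $\lambda(h,c_{(1)})\lambda(g,c_{(2)})=\lambda(h g,c)$, or the dual statement for $\lambda^{-1}$); then apply the coquasitriangular axiom (BR5), of $\tau$ for \equref{27} and \equref{29} and of $p$ for \equref{26} and \equref{28}, to commute that product past $\tau$ respectively $p$; then split the product apart again via (BR1) and annihilate the remaining factor $\lambda^{-1}*\lambda=\varepsilon$. (As a sanity check, for \equref{27} this sequence turns $\tau(h_{(1)},g_{(2)})\lambda(g_{(3)},c_{(3)})\lambda(h_{(2)},c_{(2)})\lambda^{-1}(g_{(1)},c_{(1)})$ into $\lambda(h_{(1)},c)\tau(h_{(2)},g)$, as required.) The main obstacle is exactly this bookkeeping: in each identity one must split $a$, $b$, $c$, $g$, $h$ into sufficiently many legs, recognize where an auxiliary factor $\lambda^{-1}*\lambda=\varepsilon$ should be inserted so that (BR5) becomes applicable, and verify that the surviving pairings cancel. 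Once \equref{24}--\equref{29} are verified, \thref{3.1} applies directly and produces the coquasitriangular structure $\sigma$ in the asserted form.
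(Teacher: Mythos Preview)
Your proposal is correct and follows essentially the same approach as the paper: the paper also sets $v:=\lambda$ and $u:=\lambda^{-1}\circ\nu$, invokes \thref{3.1}, and verifies \equref{24}--\equref{29} by exactly the three-step pattern you describe (merge pairings via the skew-pairing axioms, apply (BR5) for $p$ in \equref{26}, \equref{28} and for $\tau$ in \equref{27}, \equref{29}, then split and cancel a convolution-inverse pair). Your identification of which coquasitriangular structure is used in each of the four identities matches the paper's computations exactly.
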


\begin{proof}
We make use of \thref{3.1}: take $v:= \lambda$ and $u:=
\lambda^{-1} \circ \nu$, where $\nu$ is the flip map. We need to
prove that relations \equref{24} - \equref{29} are fulfilled. We
have:
\begin{eqnarray*}
LHS \equref{24} &{=}& b_{(3)} \ot \underline{\lambda(h_{(1)} ,
b_{(1)}) \lambda \bigl(S(h_{(2)}) , b_{(2)}\bigl)} \lambda(h_{(4)}, b_{(4)}) h_{(3)}\\
&{=}& b_{(1)} \ot h_{(1)}\lambda(h_{(2)} , b_{(2)}) = RHS
\equref{24}
\end{eqnarray*}

\begin{eqnarray*}
LHS \equref{25} &{=}& a_{(2)} \lambda\bigl(S(g_{(1)}) ,
a_{(1)}\bigl) \underline{\lambda(g_{(3)} , a_{(3)})
\lambda\bigl(S(g_{(4)}) , a_{(4)}\bigl)} \ot g_{(2)}\\
&{=}& a_{(2)} \lambda\bigl(S(g_{(1)}) , a_{(1)}\bigl) \ot g_{(2)}
= RHS \equref{25}
\end{eqnarray*}

\begin{eqnarray*}
LHS\equref{26} &{=}& \underline{\lambda\bigl(S(g_{(2)}) ,
a_{(1)}\bigl)} \lambda(g_{(3)} , c_{(3)}) p(a_{(2)} , c_{(2)})
\underline{\lambda
\bigl(S(g_{(1)}) , c_{(1)} \bigl)}\\
&{=}& \lambda\bigl(S(g_{(1)}) , \underline{c_{(1)}a_{(1)}}\bigl)
\underline{p(a_{(2)} , c_{(2)})}  \lambda(g_{(2)} , c_{(3)})\\
&{=}& \lambda\bigl(S(g_{(1)}) , a_{(2)}c_{(2)}\bigl) p(a_{(1)} ,
c_{(1)}) \lambda(g_{(2)} , c_{(3)})\\
&{=}& \underline{\lambda\bigl(S(g_{(2)}) , c_{(2)}\bigl)}
\lambda\bigl(S(g_{(1)}) , a_{(2)}\bigl) p(a_{(1)} , c_{(1)})
\underline{\lambda(g_{(3)} , c_{(3)})}\\
&{=}& \lambda\bigl(S(g_{(2)})g_{(3)} , c_{(2)}\bigl)
\lambda\bigl(S(g_{(1)}) , a_{(2)}\bigl) p(a_{(1)} , c_{(1)})\\
&{=}& \lambda\bigl(S(g) , a_{(2)}\bigl) p(a_{(1)} , c) =
RHS\equref{26}
\end{eqnarray*}
\begin{eqnarray*}
LHS\equref{27} &{=}& \tau(h_{(1)}, g_{(2)})
\underline{\lambda(g_{(3)}, c_{(3)}) \lambda(h_{(2)}, c_{(2)})}
\lambda \bigl(S(g_{(1)}), c_{(1)}\bigl)\\
&{=}&\underline{\tau(h_{(1)}, g_{(2)})}
\lambda(\underline{h_{(2)}g_{(3)}}, c_{(2)}) \lambda
\bigl(S(g_{(1)}), c_{(1)}\bigl)\\
&{=}& \tau(h_{(2)}, g_{(3)}) \lambda(g_{(2)}h_{(1)}, c_{(2)})
\lambda \bigl(S(g_{(1)}), c_{(1)}\bigl)\\
&{=}& \tau(h_{(2)}, g_{(3)}) \underline{\lambda(g_{(2)}, c_{(2)})}
\lambda(h_{(1)}, c_{(3)}) \underline{\lambda \bigl(S(g_{(1)}),
c_{(1)}\bigl)}\\
&{=}& \tau(h_{(2)}, g) \lambda(h_{(1)}, c) = RHS\equref{28}
\end{eqnarray*}
\begin{eqnarray*}
LHS\equref{28} &{=}& p(b_{(2)}, c_{(1)}) \lambda \bigl(S(h_{(1)}),
b_{(1)}\bigl) \underline{\lambda(h_{(2)},
c_{(2)}) \lambda(h_{(3)}, b_{(3)})}\\
&{=}& \underline{p(b_{(2)}, c_{(1)})} \lambda \bigl(S(h_{(1)}),
b_{(1)}\bigl) \lambda(h_{(2)}, \underline{b_{(3)}c_{(2)}})\\
&{=}& p(b_{(3)}, c_{(2)}) \lambda \bigl(S(h_{(1)}), b_{(1)}\bigl)
\lambda(h_{(2)}, c_{(1)}b_{(2)})\\
&{=}& p(b_{(3)}, c_{(2)}) \underline{\lambda \bigl(S(h_{(1)}),
b_{(1)}\bigl) \lambda(h_{(2)}, b_{(2)})} \lambda(h_{(3)},
c_{(1)})\\
&{=}& p(b, c_{(2)}) \lambda(h, c_{(1)}) = RHS\equref{28}
\end{eqnarray*}
\begin{eqnarray*}
LHS\equref{29} &{=}& \underline{\lambda \bigl(S(t_{(1)}),
b_{(2)}\bigl) \lambda\bigl(S(h_{(1)}), b_{(1)}\bigl)}
\tau(h_{(2)}, t_{(2)}) \lambda(h_{(3)}, b_{(3)})\\
&{=}& \lambda \bigl(S(\underline{t_{(1)}h_{(1)}}), b_{(1)}\bigl)
\underline{\tau(h_{(2)}, t_{(2)})} \lambda (h_{(3)}, b_{(2)})
\end{eqnarray*}
\begin{eqnarray*}
&{=}& \lambda \bigl(S(t_{(2)})S(h_{(2)}), b_{(1)}\bigl)
\tau(h_{(1)}, t_{(1)}) \lambda(h_{(3)}, b_{(2)})\\
&{=}& \lambda \bigl(S(t_{(2)}), b_{(1)}\bigl)
\underline{\lambda(S(h_{(2)}), b_{(2)})} \tau(h_{(1)}, t_{(1)})
\underline{\lambda(h_{(3)}, b_{(3)})}\\
&{=}& \tau(h, t_{(1)}) \lambda \bigl(S(t_{(2)}), b\bigl) =
RHS\equref{29}
\end{eqnarray*}
\end{proof}

As a consequence, we derive the necessary and sufficient
conditions for the generalized quantum double to be a
coquasitriangular Hopf algebra.

\begin{corollary}\colabel{3.3}
Let $A$ and $H$ be two Hopf algebras and $\tau: H \ot A
\rightarrow k$ be a skew pairing. Then the generalized quantum
double $A \bowtie_{\tau} H$ is a coquasitriangular Hopf algebra if
and only if both Hopf algebras $A$ and $H$ are coquasitriangular.
\end{corollary}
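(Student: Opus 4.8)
The plan is to read the statement off directly from \thref{3.1} and \thref{3.2}, which between them already encode both implications; the corollary is essentially a repackaging of those two results. First I would note that the map denoted $\tau$ in the corollary is a skew pairing $H \otimes A \to k$, so it plays the role of $\lambda$ in \thref{3.1} and \thref{3.2}; the symbol $\tau$ appearing inside those two statements refers instead to a braiding $H \otimes H \to k$, which I will temporarily call $\tau'$ to avoid the clash.

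For the ``if'' direction, suppose $(A, p)$ and $(H, \tau')$ are coquasitriangular Hopf algebras. Then \thref{3.2} applies verbatim with $\lambda := \tau$ and produces the explicit linear map
$$\sigma(a \bowtie h, b \bowtie g) = \tau\bigl(S(g_{(1)}), a_{(1)}\bigr) p(a_{(2)}, b_{(1)}) \tau'(h_{(1)}, g_{(2)}) \tau(h_{(2)}, b_{(2)}),$$
which is shown there to be a coquasitriangular structure on $A \bowtie_\tau H$. Hence the generalized quantum double is coquasitriangular.

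For the ``only if'' direction, assume $A \bowtie_\tau H$ carries some coquasitriangular structure $\sigma$; that is, the set in part $(i)$ of \thref{3.1} (with $\lambda := \tau$) is nonempty. By the bijective correspondence established there, this set is in bijection with the set of quadruples $(p, \tau', u, v)$ in which $u$ and $v$ are skew pairings, the compatibilities \equref{24}--\equref{29} hold, and --- crucially --- $(A, p)$ and $(H, \tau')$ are \emph{both} coquasitriangular Hopf algebras. Since the first set is nonempty, so is the second; selecting any quadruple in it exhibits a coquasitriangular structure $p$ on $A$ and a coquasitriangular structure $\tau'$ on $H$. This proves both $A$ and $H$ are coquasitriangular and finishes the argument.

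I expect no genuine obstacle here, since all the real work has been done in \thref{3.1} and \thref{3.2}. The only point requiring care is the bookkeeping of symbols --- the skew pairing $\tau$ of the corollary versus the braiding component (here $\tau'$) of the quadruple in \thref{3.1} --- together with the trivial but essential observation that ``$A \bowtie_\tau H$ is coquasitriangular'' is precisely the assertion that the set in part $(i)$ of \thref{3.1} is nonempty, which via the bijection forces the corresponding set of quadruples, and hence coquasitriangular structures on $A$ and on $H$, to be nonempty as well.
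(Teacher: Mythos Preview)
Your proposal is correct and matches the paper's intended argument: the corollary is stated without proof immediately after \thref{3.2} as a direct consequence of \thref{3.1} (for the ``only if'' direction) and \thref{3.2} (for the ``if'' direction), which is exactly what you do. Your handling of the notational clash between the skew pairing $\tau$ of the corollary and the braiding component of the quadruple is also appropriate.
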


Also as a special case of \thref{3.2} we recover Majid's result
\cite[Proposition 7.3.1]{majid}:

\begin{corollary}
Let $(A, p)$ be a coquasitriangular Hopf algebra. Then the
generalized quantum double $A \bowtie_{p} A$ has a
coquasitriangular structure given by:
$$\sigma(a \ot b, c \ot d) = p\bigl(S(d_{(1)}), a_{(1)}\bigl) p(a_{(2)}, c_{(1)}) p(b_{(1)}, d_{(2)}) p(b_{(2)}, c_{(2)})$$
\end{corollary}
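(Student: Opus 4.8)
The plan is to deduce this corollary directly from \thref{3.2} by specializing to $H = A$, $\tau = p$, and taking the skew pairing $\lambda \colon H \ot A \to k$ to be $p$ itself. The first thing I would verify is that this specialization is legitimate: a coquasitriangular (braided) structure $p$ on $A$ satisfies axioms (BR1)--(BR4) by definition, so $p \colon A \ot A \to k$ is in particular a skew pairing on $(A, A)$. Hence the generalized quantum double $A \bowtie_{p} A$ is defined, and \thref{3.2} applies with the data $(A, p)$, $(H, \tau) = (A, p)$ and $\lambda = p$.

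It then remains only to substitute into the formula furnished by \thref{3.2}, namely
\[
\sigma(a \bowtie h, b \bowtie g) = \lambda\bigl(S(g_{(1)}), a_{(1)}\bigr)\, p(a_{(2)}, b_{(1)})\, \tau(h_{(1)}, g_{(2)})\, \lambda(h_{(2)}, b_{(2)}).
\]
Relabelling the tensor legs of the two arguments as $a \bowtie h = a \ot b$ and $b \bowtie g = c \ot d$ (i.e.\ replacing $h$ by $b$, $b$ by $c$, $g$ by $d$) and putting $\lambda = \tau = p$, this becomes
\[
\sigma(a \ot b, c \ot d) = p\bigl(S(d_{(1)}), a_{(1)}\bigr)\, p(a_{(2)}, c_{(1)})\, p(b_{(1)}, d_{(2)})\, p(b_{(2)}, c_{(2)}),
\]
which is exactly the asserted expression; since \thref{3.2} already guarantees that this $\sigma$ satisfies (BR1)--(BR5), no further argument is needed, and comparison with \cite[Proposition 7.3.1]{majid} confirms that Majid's formula has been recovered.

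I do not anticipate any genuine obstacle: the entire content is the remark that a coquasitriangular structure on $A$ is simultaneously a skew pairing on $(A, A)$, which is what places $A \bowtie_p A$ inside the scope of \thref{3.2}. The only point requiring a modicum of care is the bookkeeping of Sweedler indices in the substitution --- in particular, checking that the antipode $S$ lands on the correct leg of the correct copy of $A$ --- but this is entirely routine. As an alternative, one could bypass \thref{3.2} and verify the six compatibilities \equref{24}--\equref{29} of \thref{3.1} directly for the maps $u$ and $v$ obtained from $\lambda = p$ as in the proof of \thref{3.2}; however, the specialization route above is the shortest.
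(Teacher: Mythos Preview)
Your approach is correct and is essentially identical to the paper's own proof, which consists of the single line ``Consider $A = H$ and $\lambda = \tau := p$ in \thref{3.2}.'' The only difference is that you spell out explicitly why $p$ qualifies as a skew pairing and carry out the variable relabelling in detail, whereas the paper leaves both points implicit.
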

\begin{proof}
Consider $A = H$ and $\sigma = \tau := p$ in \thref{3.2}.
\end{proof}

\begin{examples}
$1)$ Consider the group algebra $k \ZZ$ with the obvious Hopf
algebra structure and let $g$ be a generator of $\ZZ$ in
multiplicative notation. We have a coquasitriangular structure $p:
k \ZZ \ot k \ZZ\rightarrow k$
given by: $p(g^{t}, g^{l}) = q^{tl}$. \\
Now consider the polynomial algebra $k[X]$ with the coalgebra
structure and the antipode given by:
$$\Delta(X^{n}) = \sum_{k=0}^{n} \binom{n}{k} X^{k} \ot X^{n-k}, \quad \varepsilon(X^{n}) = 0, \quad S(X^{n}) = (-1)^{n}X^{n}, \quad \mbox{for all}\quad n>0$$
Any element $\alpha \in k$, induces a coquasitriangular structure
$\tau$ on $k[X]$ as follows:
$$\tau(X^{i}, X^{j}) = \left \{\begin{array}{rcl}
0, \, & \mbox { if }& i \neq j\\
i!\alpha^{i}, \, & \mbox { if }& i=j
\end{array} \right.
$$
Moreover, there is a skew pairing $\lambda$ between the two Hopf
algebras $k[X]$ and $k \ZZ$ given by:
$$\lambda(X^{m}, g^{t}) = t^{m}$$
with the convention that $t^{0} = 1$ even if $t = 0$. Thus, by
applying \thref{3.2} we obtain a coquasitriangular structure
$\sigma$ on the generalized quantum double $k \ZZ
\bowtie_{\lambda} k[X]$:
$$\sigma(g^{t} \ot X^{n}, g^{l} \ot X^{m}) = \sum_{k \in \overline{0,m}, r \in \overline{0,n}}^{r+k=m} (-1)^{k} \binom{m}{k} \binom{n}{r} t^{k} q^{tl} l^{n-r} r! \alpha^{r}$$

$2)$ Let $k$ be a field with char$k \neq 2$ and $H_{4}$ be
Sweedler's Hopf algebra. That is, $H_{4}$ is generated as an
algebra by  elements $g$ and $x$ subject to relations:
$$g^{2} = 1, \quad x^{2}=0, \quad xg = -gx$$
The coalgebra structure and the antipode are given by:
$$\Delta(g) = g \ot g, \quad \Delta(x) = x \ot g + 1 \ot x, \quad \varepsilon(g) = 1, \quad \varepsilon(x) = 0$$
$$S(g) = g, \quad S(x) = gx$$
For any $\alpha \in k$ the map $p_{\alpha}:H_{4} \ot H_{4}
\rightarrow k$ is a coquasitriangular structure on $H_{4}$, where
$p_{\alpha}$ is defined as:
\begin{center}
\begin{tabular} {l | r  r  r  r  }
$p_{\alpha}$ & 1 & $g$ & $x$ & $gx$\\
\hline 1 & 1 & 1 & 0 & 0\\
$g$ & 1 & -1 & 0 & 0 \\
$x$ & 0 & 0 & $\alpha$ & $\alpha$\\
$gx$ & 0 & 0 & $\alpha$ & $\alpha$ \\
\end{tabular}
\end{center}
Let $\alpha$, $\beta$, $\gamma \in k$ and consider $p_{\alpha}$,
$p_{\beta}$, $p_{\gamma}$ the corresponding coquasitriangular
structures on $H_{4}$. Since any coquasitriangular structure is in
particular a skew pairing, we can construct the generalized
quantum double $H_{4} \bowtie_{p_{\gamma}} H_{4}$. In view of
\thref{3.2} there is a coquasitriangular structure on $H_{4}
\bowtie_{p_{\gamma}} H_{4}$ given by:
$$\sigma(a \ot h, b \ot g) = p_{\gamma}\bigl(S(g_{(1)}), a_{(1)}\bigl)p_{\alpha}(a_{(2)}, b_{(1)}) p_{\beta}(h_{(1)}, g_{(2)}) p_{\gamma}(h_{(2)}, b_{(2)})$$

$3)$ Let $k$ be a field with char$(k) \neq 2$ and consider the
$k$-algebra $\widetilde{U(n)}$ defined by generators $\{c, x_{1},
..., x_{n}, y_{1}, ..., y_{n}\}$ and relations:
\begin{eqnarray*}
c^{2} = 1, \quad x_{i}^{2} = y_{i}^{2} = 0, \quad cx_{i} + x_{i}c
= 0, \quad cy_{i} +  y_{i}c = 0\\
x_{i}x_{j} + x_{j}x_{i} = 0, \quad y_{i}y_{j} + y_{j}y_{i} = 0,
\quad x_{i}y_{j} = y_{j} x_{i}, \quad 1 \leq i \leq
n\\
\end{eqnarray*}
$\widetilde{U(n)}$ has a Hopf algebra structure given by:
\begin{eqnarray*}
\Delta(c) = c \ot c, \quad \Delta(x_{i}) = 1 \ot x_{i} + x_{i} \ot
c, \quad \Delta(y_{i}) = c \ot y_{i} + y_{i} \ot 1\\
\varepsilon(c) = 1, \quad \varepsilon(x_{i}) = \varepsilon(y_{i})
= 0, \quad S(c) = c, \quad S(x_{i}) = cx_{i}, \quad S(y_{i}) =
y_{i}c, \quad 1 \leq i \leq n
\end{eqnarray*}
$\widetilde{U(n)}$ is a quotient of the Hopf algebra $U(n)$
introduced by Takeuchi in \cite{Ta}. Now consider
$\widetilde{B_{-}}$ and $\widetilde{B_{+}}$ to be the Hopf
subalgebras of $\widetilde{U(n)}$ generated by $\{c, y_{1}, ...,
y_{n}\}$ respectively $\{c, x_{1}, ..., x_{n}\}$. These are the
so-called \textit{Borel subalgebras}. $\widetilde{B_{-}}$ and
$\widetilde{B_{+}}$ are coquasitriangular Hopf algebras with:
$$\tau: \widetilde{B_{-}} \ot \widetilde{B_{-}} \rightarrow k, \quad \tau(c, c) = -1, \quad \tau(c, y_{i}) = \tau(y_{j}, c) = 0, \quad \tau(y_{i}, y_{j}) = \alpha_{ij}, \quad 1\leq i, j \leq n$$
$$p: \widetilde{B_{+}} \ot \widetilde{B_{+}} \rightarrow k, \quad p(c, c) = -1, \quad p(c,x_{i}) = p(x_{j}, c) = 0, \quad p(x_{i}, x_{j}) = \beta_{ij}, \quad 1\leq i, j \leq n$$
Moreover, there is a skew-pairing $\lambda: \widetilde{B_{-}} \ot
\widetilde{B_{+}} \rightarrow k$ given by:
$$\lambda(c, c) = -1, \quad \lambda(c, x_{j}) = \lambda(y_{i}, c) = 0, \quad \lambda(y_{i}, x_{j}) = \delta_{i j}, \quad 1\leq i, j \leq n$$
where $\delta_{i,j}$ is the Kronecker delta. Therefore, using
\thref{3.2} the generalized quantum double $\widetilde{B_{+}}
\bowtie_{\tau} \widetilde{B_{-}}$ is a coquasitriangular Hopf
algebra with the coquasitriangular structure $\sigma:
(\widetilde{B_{+}} \bowtie_{\tau} \widetilde{B_{-})} \ot
(\widetilde{B_{+}} \bowtie_{\tau} \widetilde{B_{-}}) \rightarrow
k$ given by:

\begin{center}
\begin{tabular} {c | c  c  c  c  }
$\sigma$ & $c \bowtie c$ & $c \bowtie y_{i}$ & $x_{j} \bowtie c$ & $x_{k} \bowtie y_{l}$\\
\hline $c \bowtie c$ & 1 & 0 & 0 & 0\\
$c \bowtie y_{s}$ & 0 & $\alpha_{s i}$ & $\delta_{s j}$ & 0 \\
$x_{m} \bowtie c$ & 0 & $-\delta_{m j}$ & $\beta_{m j}$ & 0\\
$x_{n} \bowtie y_{r}$ & 0 & 0 & 0 & $\alpha_{r l} \beta_{n k} - \delta_{r k} \delta_{l n}$ \\
\end{tabular}
\end{center}
\end{examples}

\section{Acknowledgment}
The author is supported by an ''Aspirant'' Fellowship from the
Fund for Scientific Research--Flanders (Belgium) (F.W.O.
Vlaanderen). This research is part of the grant no. 88/05.10.2011
of the Romanian National Authority for Scientific Research,
CNCS-UEFISCDI.

\end{document}